\documentclass[11pt,reqno]{amsart}
\usepackage{amsfonts}
\usepackage{amsthm, amsfonts, amssymb, color}
\usepackage{mathrsfs}
\usepackage{caption}
\usepackage{enumerate}
\usepackage{cases}
\usepackage{amsmath}
\usepackage{mathbbol}
\usepackage{footnote}
\usepackage{amstext, amsxtra}
\usepackage{txfonts}
\usepackage{tikz}

\usepackage[colorlinks, linkcolor=black, citecolor=blue,   hypertexnames=false]{hyperref}
\usepackage{graphicx}
\usepackage{subfigure}
\usepackage[symbol]{footmisc}
\allowdisplaybreaks
\oddsidemargin=-.0cm
\evensidemargin=-.0cm
\textwidth=16cm
\textheight=22cm
\topmargin=0cm
\setlength\textheight{44cc} \setlength\textwidth{30cc}
\setlength\topmargin{0in} \setlength\parskip{5pt}

\widowpenalty=10000

\def \and {{\qquad\text{and}\qquad}}

\newtheorem{theorem}{Theorem}[section]
\newtheorem{proposition}[theorem]{Proposition}
\newtheorem{corollary}[theorem]{Corollary}
\newtheorem{lemma}[theorem]{Lemma}
\newtheorem{definition}[theorem]{Definition}

\newtheorem{remark}[theorem]{Remark}

\numberwithin{equation}{section}
\theoremstyle{definition}

\title[UP and Geometric Condition for the Observability of Schr\"{o}dinger Equations]
{Uncertainty Principle and Geometric Condition for the Observability of Schr\"{o}dinger Equations}

\author{Longben Wei \quad Zhiwen Duan \quad Hui Xu }

\address{Longben Wei,  School of Mathematics and Statistics, Huazhong University of Science and Technology,  Wuhan,  430074,  P.R. China}
\email{d202080006@hust.edu.cn}
\address{Zhiwen Duan,  School of Mathematics and Statistics, Huazhong University of Science and Technology,  Wuhan,  430074,  P.R. China}
\email{duanzhw@hust.edu.cn}
\address{Hui Xu,  School of Mathematics and Statistics, Huazhong University of Science and Technology,  Wuhan,  430074,  P.R. China}
\email{d202180016@hust.edu.cn}

\subjclass[2010]{35J10; 93B07; 42A65; 42C20}
\keywords{Schr\"{o}dinger equation, Observability, Logvinenko-Sereda type uncertainty principle, Inverse-square potentials}


\begin{document}
	
	\begin{abstract}
	We provide necessary and sufficient geometric conditions for the exact observability of the Schrödinger equation with inverse-square potentials on the half-line. These conditions are derived from a Logvinenko-Sereda type theorem for generalized Fourier transform. Specifically, the generalized Fourier transform associated with the Schrödinger operator with inverse-square potentials on the half-line is the well-known Hankel transform. We present a necessary and sufficient condition for a subset $\Omega$, such that a function whose Hankel transform is supported in a given interval can be bounded, in the $L^2$-norm, from above by its restriction to $\Omega$, with a constant independent of the position of the interval.
		
		
	\end{abstract}
	
	\maketitle
	

\section{Introduction}
\subsection{Model and Result}
In this paper, we consider the observability problem for the following Schrödinger equation:
\begin{equation}\label{Se1.1}
	i \partial_t u(t,x) = H_{\nu} u(t,x), \quad x \in \mathbb{R}^+, \, t > 0, \quad u\mid_{t=0} = u_0 \in L^{2}(\mathbb{R}^{+}; \mathbb{C}),
\end{equation}
where $\mathbb{R}^+$ denotes the half-line $[0, \infty)$ and the Schrödinger operators $H_{\nu}$ are given by the Friedrichs self-adjoint extension of the following family of differential operators:
\begin{equation}
	L_{\nu} = -\partial_{x}^{2} + \left(\nu^{2} - \frac{1}{4}\right) \frac{1}{x^2},
\end{equation}
with domain $C_0^{\infty}(\mathbb{R}^{+})$ for every $\nu \geq 0$. The family of differential operators $-\partial_{x}^{2} + \left(\nu^{2} - \frac{1}{4}\right) \frac{1}{x^2}$ is quite special, appearing in numerous applications, e.g., as the radial part of the Laplacian in any dimension. Their generalized eigenfunctions can be expressed in terms of Bessel-type functions, and they have a rich and intricate theory (see \cite{KSWW, CH, EG}). Moreover, the heat flow associated with the inverse-square potential has been studied in the context of combustion theory (see \cite{LZ} and references therein). The mathematical interest in these equations mainly arises from the fact that the potential term is homogeneous of degree $-2$, meaning it scales exactly the same as the Laplacian. This scaling property makes perturbation methods ineffective in analyzing the effect of this potential. In fact, the decay behavior of such potentials represents a borderline case for the existence of global-in-time estimates for the Schrödinger equation with a potential (see \cite{RS1}).
\par The well-posedness of our model can be derived as follows. The classical Hardy inequality (see \cite{BM}) states that for every \( u \in C_0^{\infty}(\mathbb{R}^{+}) \),
\begin{equation}\label{Se1.2}
	\frac{1}{4} \int_0^{\infty} \frac{\arrowvert u(x)\arrowvert^2}{x^2} \, dx \leq \int_0^{\infty} \arrowvert u'(x)\arrowvert^2 \, dx,
\end{equation}
where \( \frac{1}{4} \) is the best possible constant.

\par This inequality implies that the operator \( -\partial_x^2 + (\nu^2 - \frac{1}{4}) \frac{1}{x^2} \) is form-bounded from below on \( C_0^{\infty}(\mathbb{R}^{+}) \). Consequently, a Friedrichs extension of this operator exists. Throughout this paper, we will only consider the Friedrichs extension \( H_\nu \) of the family \( -\partial_x^2 + (\nu^2 - \frac{1}{4}) \frac{1}{x^2} \) on \( C_0^{\infty}(\mathbb{R}^{+}) \). This extension is equivalent to the self-adjoint extension of \( -\partial_x^2 + (\nu^2 - \frac{1}{4}) \frac{1}{x^2} \) in \( L^2(\mathbb{R}^{+}) \), with the Dirichlet boundary condition at \( x = 0 \).

\par We refer to \cite{RS} for the theory of Friedrichs extensions. For a more comprehensive discussion on the theory of self-adjoint extensions for the formal operator \( -\partial_{x}^{2} + \left( \nu^2 - \frac{1}{4} \right) \frac{1}{x^2} \), we direct the reader to \cite{LJV, JS}.
\par Now, by Stone's theorem, the equation \eqref{Se1.1} has a unique weak solution in the sense that \( u(t,x) \in C(\mathbb{R}_t; L^2(\mathbb{R}^{+})) \), given by the semigroup \( e^{-itH_\nu} \) acting on \( u_0 \). That is, the solution is given by \( u(t,x) = e^{-itH_\nu} u_0 \). The main goal of this paper is to characterize the geometric conditions on a subset \(\Omega\) such that the following, the so-called exact observability inequality,
\begin{equation}\label{Se1.3}
	\int_{\mathbb{R}^{+}} \arrowvert u(0,x)\arrowvert^2 \, dx \leq C \int_0^T \int_\Omega \arrowvert u(t,x)\arrowvert^2 \, dx \, dt
\end{equation}
holds with a universal constant \( C \) depending only on \( \Omega \) and the time \( T \), whenever \( u(t,x) \) solves equation \eqref{Se1.1}. When a measurable set \( \Omega \subset \mathbb{R}^{+} \) satisfies \eqref{Se1.3} for some time \( T \), we call \( \Omega \) an \emph{observable set} at some time for equation \eqref{Se1.1}. We note that by the Hilbert uniqueness method, the exact observability of the evolution equation \eqref{Se1.1} is equivalent to the exact controllability of the adjoint system. For the concept of exact controllability, we refer to \cite{Coron}, for example.

\par Motivated by recent research \cite{HWW, JK} on the exact observability for one-dimensional free and harmonic Schrödinger equations, they independently derived the sharp geometric conditions for the observable set of one-dimensional free and harmonic Schrödinger equations. Precisely, the equations they concerned with are the following equations on \( \mathbb{R} \):
\begin{equation}\label{Se1.4}
	i \partial_t u(t,x) = - \partial_x^2 u(t,x), \quad (t,x) \in \mathbb{R} \times \mathbb{R}, \quad u(0,x) \in L^2(\mathbb{R}; \mathbb{C}),
\end{equation}
and
\begin{equation}\label{Se1.5}
	i \partial_t u(t,x) = \left( - \partial_x^2 + x^2 \right) u(t,x), \quad (t,x) \in \mathbb{R} \times \mathbb{R}, \quad u(0,x) \in L^2(\mathbb{R}; \mathbb{C}).
\end{equation}
They showed that a subset \( \Omega \subset \mathbb{R} \) is an observable set at some time for equation \eqref{Se1.4} if and only if \( \Omega \) is thick in \( \mathbb{R} \), meaning that there exist \( \gamma > 0 \) and \( L > 0 \) such that
\begin{equation}\label{Thick}
	\arrowvert \Omega \cap [x,x+L] \arrowvert \geq \gamma L, \quad \text{for each} \, x \in \mathbb{R}.
\end{equation}
Moreover, a subset \( \Omega \) is an observable set at some time for equation \eqref{Se1.5} if and only if \( \Omega \) is weakly thick in \( \mathbb{R} \), meaning that
\begin{equation*}
	\liminf_{x \to \infty} \frac{\arrowvert \Omega \cap [-x,x] \arrowvert}{x} > 0.
\end{equation*}
The authors in \cite{HWW} also prove that the thickness condition for a set is strictly stronger than the weakly thickness condition.

\par Comparing the above results, it becomes apparent that the presence of growth potential terms in the Schrödinger equation leads to a significant difference in the observable set compared to the free Schrödinger equation. The objective of this paper is to gain a better understanding of how decaying potentials, particularly the inverse-square potentials, impact the sharp geometric conditions of the observable set for the Schrödinger equation. In order to achieve this goal, we introduce the following definition.

\begin{definition}\label{Se1.6}
	A measurable subset \( \Omega \subset \mathbb{R}^+ \) is called \((r, L)\)-thick if there exist \( r > 0 \) and \( L > 0 \) such that for all \( x \geq 0 \),
	\begin{equation}\label{Se1.61}
		\arrowvert\Omega \cap [x, x + L] \arrowvert \geq r L,
	\end{equation}
	where \( \arrowvert \Omega \cap [x, x + L] \arrowvert \) denotes the Lebesgue measure of \( \Omega \cap [x, x + L] \).
\end{definition}
This definition is simply the condition \eqref{Thick} restricted to sets \( \Omega \subset \mathbb{R}^+ \). The first main result of this paper is the following theorem.

\begin{theorem}\label{T1.1}
	Let a subset \( \Omega \subset \mathbb{R}^+ \) be a measurable set. Then the following statements are equivalent:
	\begin{itemize}
		\item[(i)] The set \( \Omega \) is thick.
		\item[(ii)] The set \( \Omega \) is an observable set at some time for equation \eqref{Se1.1}.
	\end{itemize}
\end{theorem}
\begin{remark}
	For \( \nu = \frac{1}{2} \), our model \eqref{Se1.1} becomes
	\begin{equation}\label{R0}
		i\partial_t u(t,x) = -\partial_{x}^{2} u(t,x), \quad x \in \mathbb{R}^{+}, \, t > 0, \quad u \mid_{t=0} = u_0 \in L^{2}(\mathbb{R}^{+}; \mathbb{C}),
	\end{equation}
	with the function \( u(t,x) \) satisfying the Dirichlet boundary condition at \( x = 0 \). This can be viewed as the free Schrödinger equation \eqref{Se1.4} restricted to a special class of initial value functions, specifically all odd functions, in the following sense.
	
	It is well known that the explicit solution to the free Schrödinger equation \eqref{Se1.4} in terms of the Fourier transform is given by
	\begin{equation}\label{R1}
		u(x,t) = (2t)^{-\frac{1}{2}} e^{-\frac{i\pi}{4}} e^{ix^{2}/4t} \widehat{e^{i \arrowvert \cdot \arrowvert^{2}/4t} u_0}(x/2t), \quad \text{for all } x \in \mathbb{R}, t > 0.
	\end{equation}
	The explicit solution for our model \eqref{Se1.1} has a similar form, but it involves the well-known Hankel transform (see \cite{KT}), given by
	\begin{equation}\label{R2}
		u(x,t) = (2t)^{-\frac{1}{2}} e^{-\frac{i(\nu+1)\pi}{2}} e^{ix^{2}/4t} F_\nu\left( e^{i \arrowvert \cdot \arrowvert^{2}/4t} u_0 \right)\left( x/2t) \right), \quad \text{for all } x \in \mathbb{R}, t > 0,
	\end{equation}
	where the Hankel transform \( F_\nu \) is defined as
	\begin{equation*}
		F_\nu(f)(x) := \int_0^{\infty} \sqrt{xy} J_\nu(xy) f(y) \, dy, \quad f \in L^{1}(\mathbb{R}^{+}), \ \nu \geq 0,
	\end{equation*}
	with \( J_\nu \) being the Bessel function of the first kind. Now, taking \( \nu = \frac{1}{2} \), we see that \eqref{R2} is the solution of equation \eqref{R0}. Since \( F_{\frac{1}{2}} \) is simply the Fourier sine transform, defined for any odd function by
	\begin{equation*}
		F_{\frac{1}{2}}(f)(\xi) =\sqrt{\frac{2}{\pi}}\int_0^{\infty} \sin(x\xi) f(x) \, dx,
	\end{equation*}
	we conclude that \( F_{\frac{1}{2}} \) is the Fourier transform \( \mathscr{F} \) restricted to odd functions. Specifically, if \( f \) is odd and \( g = f \mid_{\mathbb{R}^{+}} \), then \( \mathscr{F}[f](\xi) = F_{\frac{1}{2}}[g](\xi) \) for \( \xi \geq 0 \). Thus, from the expressions \eqref{R1} and \eqref{R2}, our result in Theorem \ref{T1.1} for $\nu=\frac{1}{2}$ follows directly as a corollary of the aforementioned result for the free Schrödinger equation \eqref{Se1.4}. The case \( \nu \neq \frac{1}{2} \) can be seen as a perturbation of \eqref{R0} by an inverse-square potential, a decaying potential with a strong singularity at the origin. This theorem asserts that the geometric conditions under which the exact observability inequality \eqref{Se1.3} holds are invariant under these perturbations.
\end{remark}
\subsection{Main Tool: A Logvinenko-Sereda Type Theorem}
\par We point out that the characterization of observable sets for the free Schrödinger equation is derived from the famous Logvinenko-Sereda theorem in harmonic analysis, which is a type of uncertainty principle. It states that both a function and its Fourier transform cannot be arbitrarily localized, measured in terms of the smallness of their supports. Precisely, Logvinenko and Sereda first established the following theorem:

\medskip
\noindent {\bf The Logvinenko-Sereda Theorem.} Let \( J \) be an interval with \( \arrowvert J\arrowvert = b \). If \( f \in L^{p}(\mathbb{R}) \), \( p \in [1, \infty] \), and \( \text{supp}\hat{f} \subset J \), and if \( \Omega \subset \mathbb{R} \) is a thick set, then there exists a constant \( C = C(\Omega, b) \) such that
\begin{equation}\label{LS1.1}
	\| f \|_{L^{p}(\mathbb{R})} \leq C(\Omega, b) \| f \|_{L^{p}(\Omega)}.
\end{equation}
Here, \( \hat{f} \) denotes the Fourier transform of \( f \). The thickness condition on \( \Omega \) is also necessary for an inequality of the form \eqref{LS1.1} to hold, see \cite[p.113]{HJ}.

\par This result was later extended by O. Kovrijkine \cite{K}, where the following more general version was proven:

\medskip
\noindent {\bf Lemma 1.3} (Theorem 2 from \cite{K}). Let \( \{ J_k \}_{k=1}^n \) be a finite collection of intervals in \( \mathbb{R} \) with \( \arrowvert J_k\arrowvert = b \). Let \( \Omega \subset \mathbb{R} \) be a thick set. Then, there exists a constant \( C > 0 \) such that
\begin{equation}\label{Kovrijkine}
	\| f \|_{L^p(\mathbb{R})} \leq C \| f \|_{L^p(\Omega)},
\end{equation}
for all \( f \in L^p(\mathbb{R}) \), \( p \in [1, \infty] \), with \( \text{supp}\hat{f} \subset \bigcup_{k=1}^n J_k \). Moreover, \( C \) depends only on the number and size of the intervals, not on their positions.

\par In this paper, we prove a similar assertion for the Hankel transform \( F_\nu \). Using this, we deduce the observability result from Theorem \ref{T1.1} by applying the Logvinenko-Sereda type theorem for the Hankel transform \( F_\nu \). The main focus of this paper is to prove the following theorem, which constitutes the second main result.

\begin{theorem}\label{T1.2}
	Let \( \nu \geq 0 \), \( b > a \geq 0 \), and let \( f \in L^2(\mathbb{R}^+) \) with \( \text{supp}F_\nu(f) \subset [a, b] \). If \( \Omega \subset \mathbb{R}^+ \) is a thick subset, then there exists a constant \( C \) such that
	\begin{equation}\label{T1.21}
		\int_0^\infty \arrowvert f(x) \arrowvert^2 \, dx \leq C \int_\Omega \arrowvert f(x) \arrowvert^2 \, dx,
	\end{equation}
	where the constant \( C \) depends only on \( \nu \), \( \Omega \), and the length of the interval \( [a, b] \).
	
	\par Conversely, if \eqref{T1.21} holds for some constant \( C \), then \( \Omega \) must be thick.
\end{theorem}
Similar to the classical Logvinenko-Sereda theorem, A crucial property of this theorem is that the constant does not depend on the position of the interval where the support of \( F_\nu(f) \) is located, but only on the length of that interval. This property plays a crucial role in the proof of our Theorem \ref{T1.1}. We should also note that in \cite{SP}, the authors proved an inequality similar to inequality \eqref{T1.21} for the modified Hankel transform in a weighted \( L^2 \) space, with the "frequency" restricted to the interval \( [0,b] \). By exploiting the relationship between the Hankel transform \( F_\nu \) and the modified Hankel transform \( M_\nu \) (see \eqref{MH} below), we will derive the assertion for the case \( a = 0 \) in our Theorem \ref{T1.2} from their result. This is achieved after establishing an equivalent relationship between the \( \mu_\nu \) thickness condition (as used in \cite{SP}) for a set \( \Omega \) and the corresponding thickness condition in our setting. For more results on Logvinenko-Sereda-type theorems, we refer to \cite{SP,MA,GJM} and the references therein.
\subsection{Known results and contributions}
First, Theorem \ref{T1.1} addresses the observability of the Schrödinger equation with inverse square potentials on a non-compact domain. It establishes internal exact observability, which appears to be a novel result for the Schrödinger equation with these critical singular potentials. Previous work has primarily focused on exact boundary observability of the Schrödinger and wave equations with inverse square potentials on bounded domains. Key contributions in this area include the papers \cite{JE,C1,AAB}. In \cite{JE}, the authors consider the case where the singular point lies within the domain. Their results demonstrate that when the observable set takes a special form as a non-empty part of the boundary, the Schrödinger equation exhibits exact boundary observability. Meanwhile, in \cite{C1}, the author focuses on situations where the singular point is located on the boundary of the domain. The author established the exact boundary observability of the Schrödinger equation with an observable set that has the same geometrical setup as described in \cite{JE}. In \cite{AAB}, the authors consider the scenario where the inverse square singularity lies along the entire boundary of the unit ball in \( \mathbb{R}^n \) with \( n \geq 3 \). They achieve exact boundary observability with the whole boundary of the unit ball serving as the observable set. We also refer the reader to \cite{C,UE,ES,VZ}  for results related to the heat equation with inverse square potentials.
\par Secondly, considerable research has been conducted on the observability of the Schrödinger equation on compact Riemannian manifolds. In \cite{L}, it was shown that any open set satisfying the Geometric Control Condition (GCC) is an observable set at any given time. Furthermore, for manifolds with periodic geodesic flows or Zoll manifolds, it was proven in \cite{M} that the GCC is also a necessary condition. On the flat torus \( \mathbb{T}^n \), it was discovered that every non-empty open set \( E \subset \mathbb{T}^n \) is an observable set at any time, as described in \cite{HS, Ja, T} for the free Schrödinger equation, and in \cite{A2,BBZ,BZ1} for the Schrödinger equations with potentials. Moreover, \cite{BZ2} verified that for \( \mathbb{T}^2 \), each measurable set \( E \) with positive measure is an observable set at any time. \cite[Theorem 1.2]{A1} gives an observability inequality for Schrödinger equations (with some potentials) on the disk of $\mathbb{R}^{2}$. The observation is made over $\omega\times [0,T]$, where $\omega$ is an open(nonempty) subset which may not satisfy the Geometric Control Condition. For information on the observability of Schrödinger equations on negatively curved manifolds, we recommend referring to \cite{A3,DJ,DJN,J}. For recent research on the so-called observability inequality at two time points, we refer the reader to \cite{WWZ, HS}, and the references therein.
\par Thirdly, for the Schrödinger equation on non-compact Riemannian manifolds, there are relatively few existing results, as the presence of infinity in space introduces new difficulties. However, there has been growing interest in the question of observability for the Schrödinger equation in Euclidean space. In addition to the articles \cite{HWW,JK} mentioned earlier, recent work by Täufer \cite{MT} addresses the observability of the free Schrödinger equation in \( \mathbb{R}^n \), demonstrating that it is observable from any non-empty periodic open set for any positive time. This result relies on the Floquet-Bloch transform and the theory of lacunary Fourier series. This observation was later extended by Le Balc’h and Martin \cite{KLJ} to the case of periodic measurable observation sets with a periodic \( L^{\infty} \) potential in two dimensions. In a recent study, Antoine Prouf \cite{AP} investigates the Schrödinger equation in \( \mathbb{R}^n \), with \( n \geq 1 \), featuring a confining potential that grows at most quadratically. Specifically, the confining potential exhibits subquadratic growth of the form \( \lvert x \rvert^{2m} \), where \( 0 < m \leq 1 \). Through the application of semiclassical analysis techniques, Prouf establishes a refined upper bound for the optimal observation time, based on the geometry of the observable set. After completing our work, we discovered that a paper by Sun et al. \cite{SSY} had also appeared on arXiv and was recently accepted for publication in the Journal of Functional Analysis. They show that the "thickness" condition is sufficient for the observability inequality to hold for the Schrödinger equation with bounded continuous potentials. Their approach first establishes the spectral inequality for the low-frequency part using \( L^2 \)-propagation of smallness estimates, which relies on the continuity of the potential. They then deduce the spectral inequality for the high-frequency part via a perturbation method, exploiting the boundedness of the potential. They also note that their result can be extended to \( L^{\infty} \) potentials through a pointwise approximation argument. We should point out, however, that their approach does not directly apply to handle the critical singular potentials in our model. Since our approach relies on tools from harmonic analysis, we can demonstrate that the "thickness" geometric condition is also optimal in our case. We believe that, using their approach, one could similarly show that adding an \( L^{\infty} \) potential perturbation to our model would preserve the sufficiency of the thickness condition, ensuring that the observability inequality still holds. However, we do not pursue this here.
\par Finally, let us summarize the main contributions of this paper. First, we have obtained the sharp geometric condition for the observable set of the Schrödinger equation with inverse square potentials. To the best of the authors' knowledge, there has been no previous research in the literature on the internal controllability of the Schrödinger equation with inverse square potentials. Second, this paper proves a Logvinenko-Sereda type uncertainty principle related to the Hankel transform. As a byproduct of this theorem, the result extends the Logvinenko-Sereda type uncertainty principle for the modified Hankel transform, as established in \cite{SP}, to "frequency" support in arbitrary bounded intervals, rather than just intervals containing the origin.
\par {\bf Plan of the paper.} The rest of the paper is organized as follows: Section 2 provides some preliminaries, these preliminaries will appear in the proof of our theorems here and there. In Section 3, we give the proof of Theorem \ref{T1.1}. In Section 4, we prove Theorem \ref{T1.2}, which constitutes the main body of this paper. Finally, to enhance the clarity and readability of the paper, we place the proofs of Lemma \ref{L3.2} and Lemma \ref{L3.5} used in the proof of Theorem \ref{T1.2} in the appendix.
\section{Notations and Preliminaries}
\subsection{Notations}
We introduce some notations and definitions used in this paper.  We will use $C(a,b,c,...)$ to denote a constant that is allowed to depend on parameters $a$, $b$, $c$, etc. We would like to emphasize that when referring to a constant $C$ depending only on a thick set $\Omega$, it means that it depends solely on the parameters $r$ and $L$ appearing in equation \eqref{Se1.61}, that is, we will abbreviate $C(r,L)$ as $C(\Omega)$. We will denote by $\arrowvert\cdot\arrowvert$ both the Lebesgue measure on $\mathbb{R}$ and the absolute value of a real number. For real number $a$, we denote by $[a]$ as the integer part of $a$. We define $\mathbf{1}_{E}$ as the characteristic function of $E\subset \mathbb{R}^{+}$. Let \( X \) be a measure space, and let \( \mu \) be a positive measure on \( X \). For \( 1 \leq p < \infty \), we denote by \( L^p(X, \mu) \) the Banach space consisting of \( \mu \)-measurable functions on \( X \), equipped with the norms
\[
\lVert f \rVert_{L^p} = \left( \int_X \lvert f(x) \rvert^p \, d\mu(x) \right)^{1/p}.
\]
The notation \( L^p(\mathbb{R}) \) is reserved for the space \( L^p(\mathbb{R}, \lvert \cdot \rvert) \), where \( \lvert \cdot \rvert \) denotes the Lebesgue measure on $\mathbb{R}$(also denoted by \( dx \)). The same notation is reserved for \( L^p(\mathbb{R}^+) \) with the Lebesgue measure on $\mathbb{R}^+$.

For any function \( f \in L^1(\mathbb{R}, dx) \), we denote by \( \hat{f} \) the Fourier transform of \( f \), given by
\[
\hat{f}(\xi) = \mathscr{F}(f)(\xi) := \frac{1}{\sqrt{2\pi}} \int_{\mathbb{R}} e^{-ix\xi} f(x) \, dx.
\]

The Plancherel theorem for the Fourier transform states that \( \mathscr{F} \) extends to an isometric isomorphism of \( L^2(\mathbb{R}) \) onto itself. Finally, we also denote by \( f^{(n)} \) the \( n \)-th derivative of \( f \).

\subsection{Bessel equation and  Hankel transform}
In this subsection, we will examine basic facts about the Bessel equation, properties of Bessel functions, and the Hankel transform(s). There is a wealth of literature available on this topic, and we recommend readers refer to \cite{N} for further details and references. The Bessel equation is a prominent ordinary differential equation that applies to every $m\in \mathbb{R}$,
\begin{equation*}
	x^2y''(x)+xy'(x)+(x^2-m^2)y=0.
\end{equation*}
It is well known that the Bessel function of the first kind $J_m$ and $J_{-m}$ solve this equation.
\begin{equation}\label{Bessel}
	J_m(x)=\sum_{n=0}^{\infty}\frac{(-1)^{n}(x/2)^{2n+m}}{n!\Gamma(n+m+1)}.
\end{equation}
We have the following identity for $x>0$, $m>-\frac{1}{2}$, see \cite[p.580]{GL}:
\begin{equation}\label{Sec2.1}
	J_m(x)=\sqrt{\frac{2}{\pi x}}cos(x-\frac{\pi m}{2}-\frac{\pi}{4})+R_{m}(x),
\end{equation}
where $R_{m}$ is given by 
\begin{equation}
	\begin{aligned}
		R_{m}(x)&=\frac{(2\pi)^{-\frac{1}{2}}x^{m}}{\varGamma(m+\frac{1}{2})}e^{i(x-\frac{\pi m}{2}-\frac{\pi}{4})}\int^{\infty}_{0}e^{-xt}t^{m+\frac{1}{2}}[(1+\frac{it}{2})^{m-\frac{1}{2}}-1]\frac{dt}{t}\\
		&+\frac{(2\pi)^{-\frac{1}{2}}x^{m}}{\varGamma(m+\frac{1}{2})}e^{-i(x-\frac{\pi m}{2}-\frac{\pi}{4})}\int^{\infty}_{0}e^{-xt}t^{m+\frac{1}{2}}[(1-\frac{it}{2})^{m-\frac{1}{2}}-1]\frac{dt}{t},
	\end{aligned}
\end{equation}
and satisfies 
\begin{equation}\label{Sec2.2}
	\arrowvert R_{m}(x)\arrowvert\leq C_{m}x^{-\frac{3}{2}},
\end{equation} 
whenever $x\geq1$.
\par We point out that identity \eqref{Sec2.1} and the estimate \eqref{Sec2.2} are the starting points for proving Theorem \ref{T1.2} later.
\par Now, we come to define the Hankel transform $F_\nu(f)$ of order $\nu(\nu>-\frac{1}{2})$ of any function $f\in L^{1}(\mathbb{R}^{+})$ by
\begin{equation}\label{Sec2.1.6}
	F_\nu(f)(x):=\int_{0}^{\infty}\sqrt{xy}J_\nu(xy)f(y)dy,\  x\in \mathbb{R}^{+},
\end{equation}
where $J_\nu$ is the Bessel function of the first kind given above.
The Plancherel theorem for the Hankel transform states that $F_\nu$ extends to an isometric isomorphism of $L^{2}(\mathbb{R}^{+})$ onto itself. And the inverse Hankel transform has the symmetric form (see, for example,\cite{N})
\begin{equation}
	f(x):=\int_{0}^{\infty}\sqrt{xy}J_\nu(xy)F_\nu(f)(y)dy, \  x\in \mathbb{R}^{+}.
\end{equation}
\subsection{Two technical lemmas}
The first lemma is a crucial technique that will be employed later to prove the equivalence between the thickness condition and the \( \mu_{\nu} \)-thickness condition (for the precise definition of \( \mu_{\nu} \)-thickness, see \eqref{DM} below).
\begin{lemma}\label{Technical1.1}
	For any fixed $x\in [0,+\infty)$ and a constant $L>0$, assume $A\subset[x,x+L]$ is a Lebesgue measurable subset of $[x,x+L]$, and $\arrowvert A\arrowvert\geq\gamma L$, $0<\gamma\leq1$, then for every increasing  function $f$, we have
	\begin{equation}
		\int_{A}f(t)dt\geq\int_{[x,x+\gamma L]}f(t)dt.
	\end{equation}
\end{lemma}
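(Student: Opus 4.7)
The plan is to recognize this as an instance of the classical bathtub principle (or simple rearrangement): among all measurable subsets $A\subset[x,x+L]$ with $|A|\ge \gamma L$, the integral $\int_A t^k\,dt$ of the non-decreasing integrand $t^k$ (recall $k\ge 0$ and $t\ge x\ge 0$) is minimized by the leftmost subset, namely $E:=[x,x+\gamma L]$. Since the claim reduces to proving $\int_A t^k\,dt\ge\int_E t^k\,dt$, it suffices to compare the integrals over the symmetric difference $A\triangle E$.

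Concretely, I would split $\int_A t^k\,dt=\int_{A\cap E}t^k\,dt+\int_{A\setminus E}t^k\,dt$ and similarly $\int_E t^k\,dt=\int_{A\cap E}t^k\,dt+\int_{E\setminus A}t^k\,dt$, so the inequality is equivalent to
\begin{equation*}
\int_{A\setminus E}t^k\,dt\;\ge\;\int_{E\setminus A}t^k\,dt.
\end{equation*}
Now two elementary observations close the argument. First, $A\setminus E\subset[x+\gamma L,x+L]$ and $E\setminus A\subset[x,x+\gamma L]$, so $t^k\ge (x+\gamma L)^k$ on $A\setminus E$ while $t^k\le (x+\gamma L)^k$ on $E\setminus A$, using monotonicity of $t\mapsto t^k$ on $\mathbb{R}^+$. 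Second, the measure hypothesis gives
\begin{equation*}
|A\setminus E|=|A|-|A\cap E|\;\ge\;\gamma L-|A\cap E|=|E|-|A\cap E|=|E\setminus A|.
\end{equation*}
Combining these two facts,
\begin{equation*}
\int_{A\setminus E}t^k\,dt\;\ge\;(x+\gamma L)^k\,|A\setminus E|\;\ge\;(x+\gamma L)^k\,|E\setminus A|\;\ge\;\int_{E\setminus A}t^k\,dt,
\end{equation*}
which gives the desired inequality.

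There is no genuine obstacle here: the result is a direct consequence of the fact that the integrand is nondecreasing on $[x,x+L]\subset\mathbb{R}^+$ together with the set-measure hypothesis $|A|\ge \gamma L=|E|$. The only minor point worth noting is that we use $x\ge 0$ to ensure $t^k$ is nonnegative and monotonically nondecreasing (so the argument would fail if the interval straddled $0$ and $k$ were not an integer, but this case never arises on $\mathbb{R}^+$). The same proof works verbatim if $t^k$ is replaced by any nondecreasing nonnegative function on $[x,x+L]$, which is the form in which the statement will be applied in the subsequent proof of Lemma \ref{L3.2}.
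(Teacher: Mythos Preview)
Your proof is correct and follows essentially the same approach as the paper: both arguments decompose the integrals by intersecting with $E=[x,x+\gamma L]$, reduce to comparing $\int_{A\setminus E}t^k\,dt$ with $\int_{E\setminus A}t^k\,dt$, and then combine the monotonicity bound $t^k\gtrless(x+\gamma L)^k$ on these two sets with the measure inequality $|A\setminus E|\ge|E\setminus A|$. Your write-up is somewhat cleaner in notation, but the logic is identical.
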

\begin{proof}
	By using the fact that the function $f$ is increasing on $[0,+\infty]$, we observe that
	\begin{equation*}
		\begin{aligned}&\int_{A}f(t)dt\\
			&=\int_{A\cap[x,x+\gamma L]}f(t)dt+\int_{A\cap([x,x+L]\backslash[x,x+\gamma L])}f(t)dt\\
			&\geq\int_{A\cap[x,x+\gamma L]}f(t)dt+\arrowvert A\cap([x,x+L]\setminus[x,x+\gamma L])\arrowvert f(x+\gamma L)\\
			&\geq\int_{A\cap[x,x+\gamma L]}f(t)dt+\frac{\arrowvert A\cap([x,x+L]\setminus[x,x+\gamma L])\arrowvert}{\arrowvert[x,x+\gamma L]\cap([x,x+L]\setminus A)\arrowvert}\int_{[x,x+\gamma L]\cap([x,x+L]\setminus A)}f(t)dt.
		\end{aligned}
	\end{equation*}
	Then by using that $\arrowvert A\arrowvert\geq\gamma L$, we notice that
	$$\arrowvert A\cap([x,x+L]\setminus[x,x+\gamma L])\arrowvert=
	\arrowvert A\arrowvert-\arrowvert A\cap[x,x+\gamma L]\arrowvert$$
	$$\geq \arrowvert [x,x+\gamma L]\arrowvert-\arrowvert A\cap[x,x+\gamma L]\arrowvert=\arrowvert[x,x+\gamma L]\cap([x,x+L]\setminus A)\arrowvert.$$
	It follows that,
	\begin{equation*}
		\begin{aligned}
			&\int_{A}f(t)dt\\
			&\geq\int_{A\cap[x,x+\gamma L]}f(t)dt+\frac{\arrowvert[x,x+\gamma L]\cap([x,x+L]\setminus A)\arrowvert}{\arrowvert[x,x+\gamma L]\cap([x,x+L]\setminus A)\arrowvert}\int_{[x,x+\gamma L]\cap([x,x+L]\setminus A)}f(t)dt\\
			&=\int_{A\cap[x,x+\gamma L]}f(t)dt+\int_{[x,x+\gamma L]\cap([x,x+L]\setminus A)}f(t)dt=\int_{[x,x+\gamma L]}f(t)dt.
		\end{aligned}
	\end{equation*}
	This ends the proof of the lemma.
\end{proof}
The second lemma is a key technique that we will use later in proving Theorem \ref{T1.2}. Let us assume $\Omega$ is a thick set, meaning that there exist constants $L$ and $r$ such that for any $x \in \mathbb{R}^{+}$,
$$\arrowvert \Omega \cap [x, x+L]\arrowvert \geq rL.$$
We now propose the claim that for any constant $c \in \mathbb{R}^{+}$, the set $\Omega^{\prime} = \Omega \setminus [0, c]$ is also a thick set.
\begin{lemma}\label{Technical1.2}
	For a given \( (r,L) \)-thick set \( \Omega \), define \( \Omega' := \Omega \setminus [0,c] \), where \( c \) is a given nonnegative constant. Then, there exist constants \( L_1 \) and \( r_1 \) such that for all \( x \in \mathbb{R}^+ \),
	\[
	\left\arrowvert \Omega' \cap [x, x + L_1] \right\arrowvert \geq r_1 L_1.
	\]
	Furthermore, the constants \( L_1 \) and \( r_1 \) can be explicitly given as
	\[
	L_1 = \left( \frac{c}{L} + 2 \right) L, \quad r_1 = \frac{r}{\frac{c}{L} + 2},
	\]
	which depend only on \( L \), \( r \), and \( c \).
\end{lemma}
\begin{proof} 
	Let \( L_1 = \left( \frac{c}{L} + 2 \right) L \) and \( r_1 = \frac{r}{\frac{c}{L} + 2} \). We now proceed to prove that
	\begin{equation}
		\left\arrowvert \Omega' \cap [x, x + L_1] \right\arrowvert\geq r_1 L_1, \quad \forall x \in \mathbb{R}^+.
	\end{equation}
	For this purpose, we consider two cases.\\
	
	\textbf{Case 1:} Assume \( x < c \). We further divide this case into two subcases.\\
	
	\textbf{Subcase 1.1:} For \( c \geq L \), since \( L_1 = \left( \frac{c}{L} + 2 \right) L > c + L \), for \( 0 \leq x < c \), we have
	\[
	\left\arrowvert \Omega' \cap [x, x + L_1] \right\arrowvert = \left\arrowvert \Omega \cap [c, x + L_1] \right\arrowvert \geq \left\arrowvert \Omega \cap [c, c + L] \right\arrowvert \geq rL = \frac{r}{\frac{c}{L} + 2} \left( \frac{c}{L} + 2 \right) L.
	\]
	
	\textbf{Subcase 1.2:} For \( c < L \), we observe that
	\[
	\left\arrowvert \Omega' \cap [0, 2L] \right\arrowvert = \left\arrowvert\Omega \cap [c, 2L] \right\arrowvert\geq \left\arrowvert \Omega \cap [L, 2L] \right\arrowvert \geq rL.
	\]
	With our choice of \( L_1 \), we have \( L_1 \geq 2L \). Since \( x < c \), it follows that
	\[
	\left\arrowvert \Omega' \cap [x, L_1] \right\arrowvert \geq \left\arrowvert \Omega' \cap [c, L_1] \right\arrowvert \geq \left\arrowvert \Omega' \cap [c, 2L] \right\arrowvert \geq rL = \frac{r}{\frac{c}{L} + 2} \left( \frac{c}{L} + 2 \right) L.
	\]
	
	\textbf{Case 2:} Assume \( x \geq c \). This case is easier, as we observe that
	\[
	\left\arrowvert \Omega' \cap [x, x + L_1] \right\arrowvert = \left\arrowvert \Omega \cap [x, x + L_1] \right\arrowvert \geq \left\arrowvert \Omega \cap [x, x + L] \right\arrowvert \geq \frac{r}{\frac{c}{L} + 2} \left( \frac{c}{L} + 2 \right) L.
	\]
	
	Hence, \( \Omega' \) is a thick set with constants \( L_1 = \left( \frac{c}{L} + 2 \right) L \) and \( r_1 = \frac{r}{\frac{c}{L} + 2} \), depending only on \( L \), \( r \), and \( c \).
\end{proof}

A key step in the proof of Theorem \ref{T1.2} is to choose 
$c$ in above lemma as a constant that depends only on the length of the interval $J$, where the support of $F_\nu(f)$ is located.
\section{Proof of Theorem \ref{T1.1}}
\par To prove Theorem \ref{T1.1}, we first temporarily assume that Theorem \ref{T1.2} holds. In this section, we will see how Theorem \ref{T1.1} can be derived from Theorem \ref{T1.2}. We adopt the strategy from \cite{HWW}, where we begin by introducing a resolvent condition on the observability for a certain evolution equation. This approach is another version of Theorem 5.1 in \cite{ML}, and readers can also refer to \cite{HWW} for further details and references.
\begin{proposition}\label{P1}
	Let $E\subset\mathbb{R}^{+}$ be a measurable set. Then the following statements are equivalent:\\
	(i) The set $E$ is an observable set at some time for equation \eqref{Se1.1}.\\
	(ii) There is $M>0$ and $m>0$ such that
	\begin{equation}\label{P1.1}
		\| u\|^{2}_{L^{2}(\mathbb{R}^{+})}\leq M\| (H_{\nu}-\lambda)u\|^{2}_{L^{2}(\mathbb{R}^{+})}+m\| u\|^{2}_{L^{2}(E)},
	\end{equation}
	for all $u\in D(H_{\nu})$ and $\lambda\in \mathbb{R}$.
\end{proposition}
We also need the following spectral estimates which given by Miller \cite[Corollary 2.17]{ML1}.
\begin{proposition}\label{P2}
	If equation \eqref{Se1.1} is exactly observable from a measurable subset $\Omega\subset \mathbb{R}^{+}$ at some time $T>0$, then there exist some positive constants $k>0$ and $D>0$ such that 
	\begin{equation}\label{P2.1}
		\forall \lambda\in \mathbb{R},\,\, \forall f\in {\mathbf{1}}_{\{\arrowvert H_{\nu}-\lambda\arrowvert\leq\sqrt{D}\}}(L^2(\mathbb{R}^{+})),\,\,\,\,\,\, \Arrowvert f\Arrowvert_{L^2(\mathbb{R}^{+})}\leq \sqrt{k}\Arrowvert f\Arrowvert_{L^2(\Omega)}.
	\end{equation}
	\\Conversely, when the spectral estimate \eqref{P2.1} holds for some $k>0$ and $D>0$, then the system \eqref{Se1.1} is exactly observable from $\Omega$ at any time 
	\begin{equation}
		T>\pi \sqrt{\frac{1+k}{D}}.
	\end{equation}
\end{proposition}
The operators ${\mathbf{1}}_{\{\arrowvert H_{\nu}-\lambda\arrowvert\leq\sqrt{D}\}}$ appearing in \eqref{P2.1} are defined by functional calculus.
Finally we refer to Theorem 5.2 in reference \cite{LJV}, which asserts that the Hankel transform $F_\nu$ diagonalizes $H_\nu$.
\begin{proposition}\label{P3}
	$F_\nu$ is a unitary involution on $L^{2}(0,\infty)$ diagonalizing  $H_\nu$, precisely,
	\begin{equation}
		F_\nu H_\nu F_\nu^{-1}=Q^{2},
	\end{equation}
	where $Q^{2}f(x):=x^{2}f(x)$.
\end{proposition}
To get \eqref{P1.1} in Proposition \ref{P1}, we have the following proposition.
\begin{proposition}\label{P4}
	Let $\Omega\subset\mathbb{R}^{+}$ be thick, $\nu\geq 0$. There exists $C>0$ depending on $\Omega$, $\nu$, such that for all $f\in L^{2}(\mathbb{R}^{+})$, $\lambda\geq 0$,
	\begin{equation}\label{P4.1}
		C\| f\|^{2}_{L^{2}(\mathbb{R}^{+})}\leq (1+\lambda)^{-1}\| [(H_{\nu}+1)-\lambda] f\|^{2}_{L^{2}(\mathbb{R}^{+})}+\| f\|^{2}_{L^{2}(\Omega)}.
	\end{equation}
\end{proposition}
\begin{proof}
	The proof presented here is inspired by Green's argument for operator $-\partial_x^{2}$ \cite{G}.
	Let $g\in L^{2}(\mathbb{R}^{+})$, such that
	$$supp\,F_{\nu}(g)(\xi)\subset A_{\lambda}:=\{\xi\in \mathbb{R}^{+}:\arrowvert(\xi^{2}+1)^{\frac{1}{2}}-\lambda^{\frac{1}{2}}\arrowvert\leq 1\}. $$We know that $A_{\lambda}$ is an interval in $\mathbb{R}^{+}$ of length no more than 4. Therefore, for a thick set $\Omega\subset\mathbb{R}^{+}$, according to Theorem \ref{T1.2}, there exists $C>0$ (independent of $\lambda$ and $g$) such that
	\begin{equation*}
		\| g\|_{L^{2}(\mathbb{R}^{+})}\leq C\| g\|_{L^{2}(\Omega)}.
	\end{equation*}
	Denote by $P_{\lambda}$ the projection $P_{\lambda}f=F_{\nu}(\mathbf{1}_{A_{\lambda}}F_{\nu}(f))$, then for $f\in L^{2}(\mathbb{R}^{+})$,
	\begin{equation*}	
		\begin{aligned}
			\| f\|^{2}_{L^{2}(\mathbb{R}^{+})}&=\| P_{\lambda}f\|^{2}_{L^{2}(\mathbb{R}^{+})}+\|(I-P_{\lambda}) f\|^{2}_{L^{2}(\mathbb{R}^{+})}\\
			&\leq C\| P_{\lambda}f\|^{2}_{L^{2}(\Omega)}+\|(I-P_{\lambda}) f\|^{2}_{L^{2}(\mathbb{R}^{+})}\\
			&=C\|f-(I-P_{\lambda}) f\|^{2}_{L^{2}(\Omega)}+\|(I-P_{\lambda}) f\|^{2}_{L^{2}(\mathbb{R}^{+})}\\
			&\leq 2C\| f\|^{2}_{L^{2}(\Omega)}+2C\|(I-P_{\lambda}) f\|^{2}_{L^{2}(\Omega)}+\|(I-P_{\lambda}) f\|^{2}_{L^{2}(\mathbb{R}^{+})}\\
			&\leq2C\| f\|^{2}_{L^{2}(\Omega)}+(2C+1)\|(I-P_{\lambda}) f\|^{2}_{L^{2}(\mathbb{R}^{+})}.
		\end{aligned}
	\end{equation*}
	It remains to estimate the final term. Since $F_\nu$ is a unitary transform and the fact	
	$$[(H_{\nu}+1)-\lambda]f(x)=F_\nu([(\xi^{2}+1)-\lambda]F_{\nu}(f)(\xi))(x),$$ 
	then applying Lemma 1 in \cite{G},\\
	\begin{equation*}	
		\begin{aligned}
			\|[(H_{\nu}+1)-\lambda]f\|^{2}_{L^{2}(\mathbb{R}^{+})}&=
			\int_{\mathbb{R}^{+}}[(\xi^{2}+1)-\lambda]^{2}\arrowvert F_{\nu}(f)\arrowvert^{2}d\xi\\
			&\geq \int_{A_{\lambda}^{c}}[(\xi^{2}+1)-\lambda]^{2}\arrowvert F_{\nu}(f)\arrowvert^{2}d\xi\\
			&\geq C(\lambda+1)\int_{A_{\lambda}^{c}}\arrowvert F_{\nu}(f)\arrowvert^{2}d\xi\\
			&=C(\lambda+1)\|(I-P_{\lambda}) f\|^{2},
		\end{aligned}
	\end{equation*}
	which indicates that there exists $C>0$ such that
	$$C\| f\|_{L^{2}(\mathbb{R}^{+})}\leq (1+\lambda)^{-1}\| [(H_{\nu}+1)-\lambda]f\|_{L^{2}(\mathbb{R}^{+})}+\| f\|_{L^{2}(\Omega)}.$$
\end{proof}
Based on the previous propositions, we will now prove our main Theorem \ref{T1.1}.\\
\par \textbf{Proof of Theorem \ref{T1.1}}\\
{\bf Step 1.} We show that (i) of Theorem \ref{T1.1} implies (ii) of Theorem \ref{T1.1}.
\par Suppose that $\Omega$ is thick. Arbitrarily fix $\lambda\in \mathbb{R}$, and $u\in D(H_{\nu})$, in the case that $\lambda\geq0$, according to Proposition \ref{P4}, there exist constants $C(\nu,\Omega)$ and $m(\nu,\Omega)$, such that 
\begin{equation}\label{Pr1.2.1}
	\| f\|^{2}_{L^{2}(\mathbb{R}^{+})}\leq C(\nu,\Omega)(\| (H_{\nu}-\lambda)u\|^{2}_{L^{2}(\mathbb{R}^{+})}+m(\nu,\Omega)\| u\|^{2}_{L^{2}(\Omega)}),
\end{equation}
for $\lambda<0$, we have 
$$\| H_{\nu}u\|^{2}_{L^{2}(\mathbb{R}^{+})}=\| \xi^{2}F_\nu u\|^{2}_{L^{2}(\mathbb{R}^{+})}\leq\| (\xi^{2}-\lambda)u\|^{2}_{L^{2}(\mathbb{R}^{+})}=\| (H_{\nu}-\lambda)u\|^{2}_{L^{2}(\mathbb{R}^{+})},$$
which, along with (\ref{Pr1.2.1}) where $\lambda=0$, shows that \eqref{P4.1} also holds for all $\lambda<0$.
\par Therefore, by Proposition \ref{P1}, $\Omega$ is an observable set at some time for equation \eqref{Se1.1}.\\
\par {\bf Step 2.} We show that (ii) of Theorem \ref{T1.1} implies (i) of Theorem \ref{T1.1}.
Since the Hankel transform $F_\nu$ is a unitary operator, we know by above Proposition \ref{P3} that the operator \( H_{\nu}\) is unitarily equivalent to the multiplication operator $x^{2}$. Thus by functional calculus for self-adjoint operator, for every $f\in L^{2}(\mathbb{R}^{+})$, we obtain
\begin{equation}
	\mathbf{1}_{\{\arrowvert H_\nu-\lambda\arrowvert\leq\sqrt{D}\}}f(x)=F_{\nu}\left(\mathbf{1}_{\{\arrowvert \xi^2-\lambda\arrowvert\leq\sqrt{D}\}}F_{\nu}f(\xi)\right)(x),
\end{equation}
which implies
\begin{equation}
	\mathbf{1}_{\{\arrowvert H_\nu-\lambda\arrowvert\leq\sqrt{D}\}}(L^2(\mathbb{R}^{+})=\{f\in L^2(\mathbb{R}^{+}): supp\,F_\nu(f)\subset\{\xi\in \mathbb{R}^{+},\arrowvert\xi^2-\lambda\arrowvert\leq \sqrt{D}\}\}.
\end{equation} 
\par We now suppose that $\Omega$ satisfies (ii) of Theorem \ref{T1.1}. We deduce from Proposition \ref{P2} that there exist some positive constants $k>0$ and $D>0$ such that for all $\lambda\in \mathbb{R}$ and $f\in L^2(\mathbb{R}^{+})$,
\begin{equation}\label{Pr1.2.2}
	supp\,F_\nu(f)\subset\{\xi\in \mathbb{R}^{+},\arrowvert\xi^2-\lambda\arrowvert\leq\sqrt{D}\}\,\,\Rightarrow\,\,\Arrowvert f\Arrowvert\leq \sqrt{k}\Arrowvert f\Arrowvert_{L^2(\Omega)}.
\end{equation}
While taking $\lambda=0$, assertion \eqref{Pr1.2.2} becomes
\begin{equation*}
	supp\,F_\nu(f)\subset[0,D^{\frac{1}{4}}]\,\,\Rightarrow\,\,\Arrowvert f\Arrowvert\leq \sqrt{k}\Arrowvert f\Arrowvert_{L^2(\Omega)}.
\end{equation*}
Then by Theorem \ref{T1.2} with $h=D^{\frac{1}{4}}$, we obtain that the control subset $\Omega$ must be thick.
\\Thus, we have completed the proof of Theorem \ref{T1.1}. $\hfill{\Box}$
\begin{remark} We should point out that the above discussion does not provide an exact estimate for the control time \( T \). We would like to emphasize that if we assume that $\sigma_{ess}(H)=\emptyset$ and $m$ is a fixed constant, while the constant $M$ in the resolvent inequality \eqref{P1.1} depends on $\lambda$ and satisfies the convergence condition $M(\lambda) \rightarrow 0$ as $\arrowvert\lambda\arrowvert \rightarrow +\infty$, then $\Omega$ becomes the observable set at any time $T>0$ for the system \eqref{Se1.1} (see \cite [Corollary 2.14]{ML1}). It is also worth mentioning the works \cite{DJ} and \cite{DJN}, where the authors obtained a quantitative resolvent inequality of the Hautus type on negatively curved compact manifolds.
\end{remark}
\begin{remark}
	It is worth noting that if we consider $s\geq\frac{1}{2}$ and replace the operator $H_{\nu}$ with its fractional counterpart  $H_{\nu}^{s}$, then the conclusions of Theorem \ref{T1.1} remain valid without any modifications. The proof follows the same reasoning as above. However, for $0<s<\frac{1}{2}$, the measure $\arrowvert [(-\sqrt{D}+\lambda)^{\frac{1}{2s}},(\sqrt{D}+\lambda)^{\frac{1}{2s}}]\arrowvert$ no longer has a fixed upper bound, causing Proposition \ref{P2} to fail in these cases. Interested readers can refer to \cite{JK} for geometric conditions related to the controllability of fractional free and harmonic Schr\"{o}dinger equations on one-dimensional Euclidean space, and to \cite{M2} for observability inequalities concerning problems involving fractional Schr\"{o}dinger operators $(-\Delta)^{\alpha/2}+V,\,\alpha>0$ on a compact Riemannian manifold.
\end{remark}

\section{Proof of Theorem \ref{T1.2}}
In order to prove Theorem \ref{T1.2}, it is sufficient to prove the following two lemmas.
\begin{lemma}\label{KeyL1}
	Let $\nu\geq 0 $ and $f\in{L^{2}(\mathbb{R}^{+})}$, such that supp $F_{\nu}(f)\subset[0,h]$. Then there is a constant $C$ such that the following inequality holds for all such $f$,
	\begin{equation}\label{KeyL1.1}
		\int_{0}^{\infty}\arrowvert f(x)\arrowvert^{2}dx\leq C\int_{\Omega}\arrowvert f(x)\arrowvert^{2}dx, 
	\end{equation}
	if and only if $\Omega$ is thick, where $C$ depends only on $\Omega$, $\nu$ and $h$.
\end{lemma}
\begin{lemma}\label{KeyL2}
	Let $\nu\geq 0 $, $b>a>0$, given a thick subset $\Omega\subset \mathbb{R}^{+}$. Then there is a constant $C$ depends only on $\Omega$, $\nu$ and $b-a$ such that 
	\begin{equation}\label{KeyL1.2}
		\int_{0}^{\infty}\arrowvert f(x)\arrowvert^{2}dx\leq C\int_{\Omega}\arrowvert f(x)\arrowvert^{2}dx 
	\end{equation}
	holds for all functions $f\in{L^{2}(\mathbb{R}^{+})}$ with supp $F_{\nu}(f)\subset[a,b]$.
\end{lemma}
It is easy to see that Theorem \ref{T1.2} is a combination of Lemma \ref{KeyL1} and Lemma \ref{KeyL2}. Next, We will prove Lemma \ref{KeyL1} and Lemma \ref{KeyL2} one by one.
\par Before proceed, we should emphasize that we will use Lemma \ref{KeyL1} to derive Lemma \ref{KeyL2}, but this reasoning process is not as straightforward as in the case of the Fourier transform. For the Fourier transform, if formula \eqref{LS1.1} holds for the interval \( J = [0, h] \), then for \( J = [a, b] \) with \( b > a > 0 \), and assuming \( \text{supp} \, \hat{f} \subset [a, b] \), we have \( \text{supp} \, \hat{g}(\xi) := \hat{f}(\xi + a) \subset [0, b - a] \). Thus, \eqref{LS1.1} holds for the function \( g \). Moreover, since \( g(x) = e^{-iax} f(x) \), it follows that \( \arrowvert g(x)\arrowvert= \arrowvert f(x)\arrowvert \), meaning that formula \eqref{LS1.1} also holds for \( f \), with a constant \( C \) depending only on the length of \( J \). However, in our case, the modulus \( \arrowvert F_\nu(f)\arrowvert \) does not exhibit translation invariance, meaning that \( \arrowvert F_\nu(f)(x)\arrowvert \neq \arrowvert F_\nu(f(\bullet + a))(x)\arrowvert \). This lack of invariance makes the reasoning in Lemma \ref{KeyL2} significantly more complex.

\par Now, we proceed to prove Lemma \ref{KeyL1}. Before proceeding with the proof of Lemma \ref{KeyL1}, we introduce two modified definitions. First, the modified Hankel transform $M_\nu$ is given by (see \cite{SP})
\begin{equation}\label{MH}
	M_\nu(f)(x):=\int_{0}^{\infty}(xy)^{-\nu}J_\nu(xy)f(y)d\mu_\nu(y), \ x\in \mathbb{R}^{+},
\end{equation}
where $d\mu_\nu(y)=y^{2\nu+1}dy$. It is well known that $M_\nu$ extends to an isometric isomorphism from the weighted  $L^{2}(\mathbb{R}^{+}, x^{2\nu+1} dx)$ onto itself, with a symmetric inverse: $M_\nu^{-1}=M_\nu$. A modified definition of thickness is given as below (see \cite{SP}).
\begin{definition}\label{DM}
	Let $\nu\geq 0$, a measurable subset $\Omega\subset \mathbb{R}^{+}$ is called (r,L)-$\mu_{\nu}$ thick if there exist $r>0$ and $L>0$, such that for all $x\geq0$,
	\begin{equation*}
		\mu_{\nu}(\Omega\cap [x,x+L])\geq r\mu_{\nu}([x,x+L]).
	\end{equation*}
\end{definition}
Thanks to Theorem 4.3 and Lemma 4.2 in \cite{SP}, the proof of Lemma \ref{KeyL1} saves some trouble. By combining Theorem 4.3 and Lemma 4.2 in \cite{SP}, we get\\
\\{\bf Proposition}(Thm 4.3 and Lemma 4.2 from \cite{SP})
Let $\nu\geq 0 $ and $f\in L^{2}(\mathbb{R}^{+},x^{2\nu+1} dx)$, such that supp $M_{\nu}(f)\subset[0,h]$. Then, there exists a constant $C$ such that the following inequality holds for all such functions $f$:
\begin{equation} 
	\int_{0}^{\infty}\arrowvert f(x)\arrowvert^{2}du_{\nu}(x)\leq C\int_{\Omega}\arrowvert f(x)\arrowvert^{2}du_{\nu}(x),
\end{equation}
if and only if $\Omega$ is $u_{\nu}$-thick. The explicit form of $C$ is given by:
$$C=\frac{2}{3}\left(\frac{r}{300\times 9^{\nu}}\right)^{(160\sqrt{3}\pi/\ln{2})hL+\nu(\ln{3}/\ln{2})+1}.$$
\begin{remark}
	We observe that the constant $C$ depends solely on the parameters $h$ and $\nu$, as well as $r$ and $L$ from the definition of  $u_{\nu}$ thick. This fact is crucial for our subsequent analysis.
\end{remark}
\begin{remark}
	The authors in \cite{SP} did not address the scenario where the support of $M_{\nu}(f)$ is contained in $[a,h]$, with $a>0$. There are two main reasons why this generalization is not possible. Firstly, the proof relies on a variant of Bernstein's inequality for $M_\nu$, which can only be established when $a=0$. Secondly, the modulus $\arrowvert M_\nu(f)\arrowvert$ does not exhibit translation invariance as mentioned above.
\end{remark}
\par Next, we claim that a similar estimate holds for functions with \(\operatorname{supp} F_{\nu}(f) \subset [0,h]\).

\begin{lemma}\label{L3.1}
	Let \(\nu \geq 0\) and \(f \in L^2(\mathbb{R}^+)\) such that \(\operatorname{supp} F_{\nu}(f) \subset [0,h]\). Then there exists a constant \(C\) such that the following inequality holds for all such \(f\),
	\begin{equation}\label{Le3.1}
		\int_0^\infty \arrowvert f(x)\arrowvert^2 \, dx \leq C \int_\Omega \arrowvert f(x)\arrowvert^2 \, dx,
	\end{equation}
	if and only if \(\Omega\) is \(u_{\nu}\)-thick. The constant \(C\) is given by
	\begin{equation}\label{Constant}
		C = \frac{2}{3} \left( \frac{r}{300 \times 9^\nu} \right)^{\left( \frac{160 \sqrt{3} \pi}{\ln 2} \right) h L + \nu \left( \frac{\ln 3}{\ln 2} \right) + 1}.
	\end{equation}
\end{lemma}

\begin{proof}
	It can be easily verified using the definitions of \(F_\nu\) and \(M_\nu\) that
	\begin{equation}\label{Sec3.1}
		F_\nu(f)(x) = x^{\nu + \frac{1}{2}} M_\nu(y^{-\nu - \frac{1}{2}} f), \quad \forall y \in \mathbb{R}^+.
	\end{equation}
	We define the operator \(A_\nu(f)(r) = r^{-\nu - \frac{1}{2}} f(r)\), which has the following properties:
	\begin{equation*}
		A_\nu: L^2(\mathbb{R}^+, dr) \longrightarrow L^2(\mathbb{R}^+, r^{2\nu + 1} dr) \quad \text{is unitary,}
	\end{equation*}
	since
	\begin{equation*}
		\arrowvert A_\nu(f)\arrowvert^2 \, du_\nu(x) = \arrowvert f\arrowvert^2 \, dx.
	\end{equation*}
	Now, let us consider any \(f \in L^2(\mathbb{R}^+, dr)\) with \(\operatorname{supp} F_\nu(f) \subset [0,h]\). We assume that
	\begin{equation}\label{Le3.2}
		\int_0^\infty \arrowvert f(x)\arrowvert^2 \, dx \leq C \int_\Omega \arrowvert f(x)\arrowvert^2 \, dx,
	\end{equation}
	where \(\Omega\) is a subset of \(\mathbb{R}^+\). Then, it is equivalent to
	\begin{equation}\label{Le3.3}
		\int_0^\infty \arrowvert A_\nu f(x)\arrowvert^2 \, du_\nu(x) \leq C \int_\Omega \arrowvert A_\nu f(x)\arrowvert^2 \, du_\nu(x).
	\end{equation}
	By utilizing formula \eqref{Sec3.1}, we can derive that
	\begin{equation}\label{Le3.4}
		\operatorname{supp} F_\nu(f) \subset [0,h] \quad \Leftrightarrow \quad \operatorname{supp} M_\nu(A_\nu f) \subset [0,h].
	\end{equation}
	Hence, combining \eqref{Le3.4} with the proposition above, we conclude that \eqref{Le3.3} holds if and only if \(\Omega\) is \(u_\nu\)-thick.
	
	Finally, by the equivalence of \eqref{Le3.2} and \eqref{Le3.3}, we obtain the assertion of Lemma \ref{L3.1}.
\end{proof}

Now, the Lemma \ref{KeyL1} that we need to prove is a corollary of Lemma \ref{L3.1} and the subsequent lemma which assert that the $u_{\nu}$-thickness condition for a set $\Omega$ is equivalent to the thickness condition for it.
\begin{lemma}\label{L3.2}
	Let $\Omega\subset \mathbb{R}^{+}$ be a measurable set, $\nu\geq0$. Then the following statements are equivalent: \\
	\\
	(i)There exist constants $r>0$ and $L>0$ such that $\Omega$ is $(r,L)$-$u_{\nu}$ thick. \\
	\\
	(ii)There exist constants $r_1>0$ and $L_1>0$ such that $\Omega$ is $(r_1,L_1)$-thick.
	\\
	\\where $(r,L)$ and $(r_1,L_1)$ depend only on each other and $\nu$.
\end{lemma}
Since the proof of this lemma is basic, in order to make the paper clearer and more readable, we place its proof in the appendix.\\
\\
\textbf{Proof of Lemma \ref{KeyL1}}
Assuming there is a set $\Omega$ such that inequality \eqref{KeyL1} holds with some constant $C$, by Lemma \ref{L3.1}, then iff there exist constant $r>0$ and $L>0$ such that $\Omega$ is $(r,L)$-$u_{\nu}$ thick, and by Lemma \ref{L3.2}, iff there exist constant $r_1>0$ and $L_1>0$ such that  $\Omega$ is $(r_1,L_1)$-thick. Since $(r,L)$ and $(r_1,L_1)$ depend only on each other and $\nu$, then the explicit expression \eqref{Constant} implies that the constant $C$ depends only on $\Omega$, $\nu$ and $h$. Thus we obtain the assertion of Lemma \ref{KeyL1}. $\hfill{\Box}$
\begin{remark}
	Directly proving inequality \eqref{KeyL1.1} for functions with support \( F_{\nu}(f) \subset [0, h] \) presents certain challenges. This is because a crucial step in the proof, based on Kovrijkine's method, relies on the analytic properties of the function \( f \). However, in this case, \( f \) is not an analytic function, but rather a generalized power series near the origin. Therefore, the result established in \cite{SP} for the modified Hankel transform \( M_\nu \) plays a key role in proving inequality \eqref{KeyL1.1}.
\end{remark}
Next, we proceed to prove Lemma \ref{KeyL2}. To do so, we first need to establish several intermediate lemmas. We begin by defining a family of operators in \( L^2(\mathbb{R}^{+}) \) as follows:
\begin{equation}\label{TB}
	T_{\beta}(f)(x) = \sqrt{\frac{2}{\pi}} \int_{0}^{\infty} \cos(xy - \beta) f(y) \, dy, \quad f \in L^2(\mathbb{R}^{+}).
\end{equation}

Due to the periodicity of \( \cos(x) \), it suffices to consider \( \beta \in \left(-\frac{\pi}{2}, \frac{\pi}{2}\right] \). In particular, for \( \beta = 0 \), \( T_0 \) corresponds to the Fourier-cosine transform, and for \( \beta = \frac{\pi}{2} \), \( T_{\frac{\pi}{2}} \) is the Fourier-sine transform.

First, we need to establish a result concerning the boundedness and invertibility of these operators.
\begin{lemma}\label{L3.5}
	For every $\beta\in \mathbb{R}$, the operators $T_\beta$ defined by \eqref{TB} extends to bounded invertible maps on $L^2(\mathbb{R}^{+})$. Moreover the following estimate holds for some constants $B>A>0$,
	\begin{equation}\label{Le3.5}
		A\lVert f\rVert_{L^{2}(\mathbb{R}^{+})}\leq\lVert T_\beta(f)\rVert_{L^{2}(\mathbb{R}^{+})}\leq B\lVert f\rVert_{L^{2}(\mathbb{R}^{+})},
	\end{equation}
	where the lower bound $A$ depends only on $\beta$.
\end{lemma}
\par We highlight that the estimate \eqref{Le3.5} was initially mentioned in \cite{MP} without providing a proof. In that reference, the inverse formulas for the operators $T_\beta$ in  $L^{2}(\mathbb{R}^{+})$  are given using hypergeometric functions. The significance of establishing the lower bound of the operators $T_\beta$ becomes evident in proving the subsequent Lemma. For the proof, we defer it to the appendix.

Next, we will prove a Logivenko-Sereda type theorem for functions $T_\beta(f)$ with supp $f\subset[a,b]$, $b>a\geq 0$.
\begin{lemma}\label{L3.6}
	$\forall$ $\beta\in \mathbb{R}$, assume  $\Omega\subset\mathbb{R}^{+}$ is a thick set, for all $f\in {L^{2}(\mathbb{R}^{+})} $ with supp $f\subset[a,b]$, $b>a\geq 0$, then there exist constant $C$ such that 
	\begin{equation}\label{Le3.6}
		C\int_{\Omega}\arrowvert T_\beta(f)(x)\arrowvert^{2}dx\geq \int_{\mathbb{R}^{+}}\arrowvert T_\beta(f)(x)\arrowvert^{2}dx,
	\end{equation}
	where $C$ depends only on $\beta$, $\Omega$ and $b-a$.
\end{lemma}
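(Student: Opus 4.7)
The plan is to adapt the Kovrijkine-style good/bad interval argument from the proof of Lemma \ref{LP1.1} to the operator $T_\beta$. The crucial preparatory ingredient is a Bernstein-type inequality on $\mathbb{R}^+$: differentiating under the integral gives
$$(T_\beta f)^{(n)}(x)=(-1)^{\lceil n/2\rceil}T_{\beta+n\pi/2}(y^n f)(x),$$
so two applications of Lemma \ref{L3.5} (noting that $T_{\beta+n\pi/2}$ runs through only four transforms as $n$ varies) yield
$$\|(T_\beta f)^{(n)}\|_{L^2(\mathbb{R}^+)}\le B\,b^{\,n}\|f\|_{L^2(\mathbb{R}^+)}\le\frac{B}{A}\,b^{\,n}\|T_\beta f\|_{L^2(\mathbb{R}^+)},$$
with $B/A$ a constant depending on $\beta$ only through the lower bound $A=A(\beta)$ of Lemma \ref{L3.5}. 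Since $f$ is supported in $[a,b]$, by Paley--Wiener $T_\beta f$ extends from $\mathbb{R}^+$ to an entire function on $\mathbb{C}$, so the quantitative analyticity tool (Lemma \ref{LP1.2}) is available.

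For \textbf{Case 1} ($a=0$), I would essentially reproduce Case 1 of Lemma \ref{LP1.1}, but working directly in $L^2(dx)$ rather than in the weighted space $L^2(d\nu_\beta)$ (the weight is no longer needed because $T_\beta f$ is already entire, not merely meromorphic). Partition $\mathbb{R}^+$ into intervals $I_x=[x,x+L]$ at the thickness scale of $\Omega$, and call $I_x$ bad if some $n\ge 1$ gives $\int_{I_x}|(T_\beta f)^{(n)}|^2\ge 3^{n}(Cb)^{2n}\int_{I_x}|T_\beta f|^2$. A geometric series combined with the Bernstein inequality absorbs at most half of the total $L^2$-mass into the bad intervals. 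On a good interval, an averaging argument yields a point $k_x\in I_x$ at which $|(T_\beta f)^{(n)}(k_x)|\le (C'b)^n\|T_\beta f\|_{L^2(I_x)}$ for every $n\ge 0$, and the Taylor expansion of the entire function $T_\beta f$ around $k_x$ bounds $\max_{z\in D_{I_x}}|T_\beta f(z)|$ by $C_1(\beta,L,b)\|T_\beta f\|_{L^2(I_x)}$. Kovrijkine's analytic lemma (Lemma \ref{LP1.2}) applied with $J=\Omega\cap I_x$ then delivers $\|T_\beta f\|_{L^2(I_x)}\le C_2\|T_\beta f\|_{L^2(\Omega\cap I_x)}$, and summation over good intervals closes the case.

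For \textbf{Case 2} ($a>0$), the translation $\tilde f(y)=f(y+a)$ produces the decomposition
$$T_\beta f(x)=f_1(x)e^{iax}+f_2(x)e^{-iax},$$
where, up to $\beta$-dependent constants, $f_1,f_2$ are Fourier transforms of $\tilde f$ and $\overline{\tilde f}$; hence they are bandlimited of bandwidth $b-a$ and satisfy $\|f_j^{(n)}\|_{L^2(\mathbb{R})}\le (b-a)^n\|f_j\|_{L^2(\mathbb{R})}$. This is exactly the structure handled in Case 2 of Lemma \ref{LP1.1}: write $f_j=p_j+T_j$ via the Taylor formula with polynomial $p_j$ of degree $m-1$, apply Kovrijkine's auxiliary Lemma \ref{LP3.3} on $r(x)=p_1(x)e^{iax}+p_2(x)e^{-iax}$ to compare $\|r\|_{L^2(I)}$ with $\|r\|_{L^2(\Omega\cap I)}$, and control the remainder $T=T_1e^{iax}+T_2e^{-iax}$ by the Bernstein inequality for $f_j$. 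A Stirling estimate on $m!$ and the choice $m\sim L(b-a)$ allow the remainder to be absorbed, and summing over all intervals of length $L$ produces the claimed inequality.

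The main obstacle I expect is the bookkeeping of constants in Case 2, to ensure the final constant depends only on $\beta$, $\Omega$, and $b-a$ (not on the position $a$). The decisive feature is that the Bernstein constant for $f_1,f_2$ comes out as $b-a$ rather than $b$, which is precisely what the translation to $\tilde f$ arranges; the $\beta$-dependence enters only through the ratio $B/A$ of Lemma \ref{L3.5} and is therefore uniform in the position of $[a,b]$.
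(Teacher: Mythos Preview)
Your proposal is correct and follows essentially the same approach as the paper: establish a Bernstein-type inequality for $T_\beta f$ and then run the Kovrijkine good/bad interval scheme exactly as in Lemma \ref{LP1.1} (indeed the paper's own proof simply says ``the remaining steps are similar to the proof of Lemma \ref{LP1.1}, we omit those here''). The only cosmetic difference is in how the Bernstein inequality is obtained: the paper writes $T_\beta f(x)=\frac{1}{\sqrt{2\pi}}\bigl(e^{i\beta}\hat f(x)+e^{-i\beta}\hat f(-x)\bigr)$ and invokes the classical Bernstein inequality on $\hat f$, whereas you differentiate under the integral to get $T_{\beta+n\pi/2}(y^n f)$ and apply Lemma \ref{L3.5}; both routes yield the same bound, and your observation that no weighted measure $d\nu_\beta$ is needed here (since $T_\beta f$ is already entire) is a genuine simplification over the template of Lemma \ref{LP1.1}.
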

\begin{proof}
	The proof presented here is inspired by Kovrijkine's argument \cite{K}. Before proceeding our proof, we note that the constant $C$ used in the following equalities and inequalities is not fixed, it may vary from one statement to another and could depend on $\beta$, depending on whether the estimate \eqref{Le3.5} has been used.
	\par Rewrite $\cos x=(e^{ix}+e ^{-ix})/2$, then we obtain
	\begin{equation}\label{T}
		\begin{aligned}  T_\beta(f)(x)&=\sqrt{\frac{2}{\pi}}\int_{a}^{b}\cos(xy-\beta)f(y)dy\\
			&=\frac{1}{\sqrt{2\pi}}\int_{a}^{b}(e^{i(xy-\beta)}+e ^{-i(xy-\beta)})f(y)dy\\ &=\frac{1}{\sqrt{2\pi}}(e^{-i\beta}\int_{a}^{b}e^{ixy}f(y)dy+e^{i\beta}\int_{a}^{b}e^{-ixy}f(y)dy)\\ &=\frac{1}{\sqrt{2\pi}}e^{i\beta}\hat{f}(x)+\frac{1}{\sqrt{2\pi}}e^{-i\beta}\hat{f}(-x).
		\end{aligned}
	\end{equation} 
	The standard Bernstein inequality says for all functions $f$ with supp\,$\hat{f}\subset[-b,b]$, then we have the following estimate
	\begin{equation}\label{Bernstein}
		\| f^{(n)} \|_{L^{2}(\mathbb{R})} \leq  b^n \| f \|_{L^{2}(\mathbb{R})}
	\end{equation}
	for every $n\in \mathbb{N}$.
	\par Now, for every $f\in L^2(\mathbb{R}^{+})$ with $supp\,f\subset[a,b]$, we denote $g(x)=f(x+a)$, then $supp\,g(x)\subset[0,b-a]$. By \eqref{T}, we have
	\begin{equation*}
		T_\beta(f)(k)=\frac{1}{\sqrt{2\pi}}(e^{iak}e^{-i\beta}\hat{g}(-k)+e^{-iak}e^{i\beta}\hat{g}(k)).
	\end{equation*}
	Denoting $f_0(k)=T_\beta(f)(k)$, $f_1(k)=\frac{1}{\sqrt{2\pi}}e^{-i\beta}\hat{g}(-k)$, $f_2(k)=\frac{1}{\sqrt{2\pi}}e^{i\beta}\hat{g}(k)$, $\lambda_1=a$, $\lambda_2=-a$, we get
	$$f_0(k)=e^{i\lambda_1k}f_1(k)+e^{i\lambda_2k}f_2(k).$$
	We will need the Taylor formula for a analytic function $h(x)$
	\begin{equation}\label{Taylor}
		\begin{aligned}
			h(x)&=\sum_{l=0}^{m-1}\frac{h^{l}(s)}{l!}(x-s)^{l}+\frac{1}{(m-1)!}\int_{s}^{x}h^{m}(t)(x-t)^{m-1}dt\\
			&=p(x)+\frac{1}{(m-1)!}\int_{s}^{x}h^{(m)}(t)(x-t)^{m-1}dt,
		\end{aligned}
	\end{equation}
	where $p(x)$ is a polynomial of degree $m-1$. By the assumption that $\Omega$ is a thick set, that is, there exist constants $r>0$ and $L>0$ such that
	$$\arrowvert\Omega\cap [x,x+L]\arrowvert\geq rL,\quad \forall x\in \mathbb{R}^{+}.$$
	We now divide $\mathbb{R}^{+}$ into intervals $I_s=[s,s+L]$ of length $L$. Consider one of them: $I=[s,s+L]$. Then using the Taylor formula \eqref{Taylor} for analytic functions $f_1$ and $f_2$, we obtain
	\begin{equation*}
		\begin{aligned}
			f_0(k)&=\sum_{i=1}^{2}f_{i}(k)e^{i\lambda_{i}k}\\
			&=\sum_{i=1}^{2}p_{i}(k)e^{i\lambda_{i}k}+\frac{1}{(m-1)!}\sum_{i=1}^{2}e^{i\lambda_{i}k} \int_{s}^{k}f^{(m)}_{i}(t)(k-t)^{m-1}dt\\
			&=P(k)+T(k).
		\end{aligned}
	\end{equation*}
	Applying Holder's inequality, we have 
	\begin{equation}\label{ep3.11}
		\begin{aligned}
			\int_{I}\arrowvert T(k)\arrowvert^{2}dk&\leq \frac{2}{[(m-1)!]^{2}}\sum_{i=1}^{2}\int_{I}\arrowvert\int_{s}^{k}f^{(m)}_{i}(t)(k-t)^{m-1}dt\arrowvert^{2}dk\\
			&\leq \frac{2L^{2m}}{[m!]^{2}}\sum_{i=1}^{2}\int_{I}\arrowvert f^{(m)}_{i}\arrowvert^{2}dk.
		\end{aligned}
	\end{equation}
	\par We need the following auxiliary lemma (see \cite [Lemma 3]{K}) in the next step.
	\begin{lemma}\label{LP3.3}
		If $h(x)=\sum_{k=1}^{n}p_{k}(x)e^{i\lambda_{k}x},$ where $p_{k}(x)$ is a polynomial of degree $\leq m-1$ and $E\subset I$ is a measurable with $\arrowvert E\arrowvert>0$, then
		$$\|h\|_{L^{p}(I)}\leq(\frac{C\arrowvert I\arrowvert}{\arrowvert E\arrowvert})^{nm-\frac{p-1}{p}	}\cdot\|h\|_{L^{p}(E)}.$$
	\end{lemma}
	Using the above lemma and the estimate, we have
	\begin{equation*}
		\begin{aligned}
			\int_{I}\arrowvert f_0(k)\arrowvert^{2}dk&\leq 2\int_{I}\arrowvert P(k)\arrowvert^{2}dk+2\int_{I}\arrowvert T(k)\arrowvert^{2}dk\\
			&\leq (\frac{C\arrowvert I\arrowvert}{\arrowvert\Omega\cap I\arrowvert})^{4m-1}\int_{\Omega\cap I}\arrowvert P(k)\arrowvert^{2}dk+2\int_{I}\arrowvert T(k)\arrowvert^{2}dk\\
			&\leq (\frac{C}{r})^{4m-1}(2\int_{\Omega\cap I}\arrowvert f_0\arrowvert^{2}dk+2\int_{\Omega\cap I}\arrowvert T\arrowvert^{2}dk)+2\int_{I}\arrowvert T\arrowvert^{2}dk\\
			&\leq(\frac{C}{r})^{4m-1}\int_{\Omega\cap I}\arrowvert f_0\arrowvert^{2}dk+(\frac{C}{r})^{4m-1}\int_{I}\arrowvert T\arrowvert^{2}dk\\
			&\leq(\frac{C}{r})^{4m-1}\int_{\Omega\cap I}\arrowvert f_0\arrowvert^{2}dk+(\frac{C}{r})^{4m-1}\frac{2L^{2m}}{[m!]^{2}}\sum_{i=1}^{2}\int_{I}\arrowvert f^{(m)}_{i}\arrowvert^{2}dk.
		\end{aligned}
	\end{equation*}
	Summing over all intervals $I$, we have\\
	\par $\int_{\mathbb{R}^{+}}\arrowvert	T_\beta(f)\arrowvert^{2}dk$
	\begin{equation*}
		\begin{aligned}
			&\leq (\frac{C}{r})^{4m-1}\int_{\Omega}\arrowvert	T_\beta(f)\arrowvert^{2}dk+(\frac{C}{r})^{4m-1}\frac{2L^{2m}}{[m!]^{2}}\sum_{i=1}^{2}\int_{\mathbb{R}^{+}}\arrowvert f^{(m)}_{i}\arrowvert^{2}dk\\
			&\leq (\frac{C}{r})^{4m-1}\int_{\Omega}\arrowvert	T_\beta(f)\arrowvert^{2}dk+(\frac{C}{r})^{4m-1}\frac{2L^{2m}(b-a)^{2m}}{[m!]^{2}}\sum_{i=1}^{2}\int_{\mathbb{R}^{+}}\arrowvert f_{i}\arrowvert^{2}dk\\
			&\leq (\frac{C}{r})^{4m-1}\int_{\Omega}\arrowvert	T_\beta(f)\arrowvert^{2}dk+(\frac{C}{r})^{4m-1}\frac{2L^{2m}(b-a)^{2m}}{[m!]^{2}}\int_{\mathbb{R}^{+}}\arrowvert	T_\beta(f)\arrowvert^{2}dk\\
			&\leq (\frac{C}{r})^{4m-1}\int_{\Omega}\arrowvert	T_\beta(f)\arrowvert^{2}dk+(\frac{C}{r})^{4m}\frac{L^{2m}(b-a)^{2m}}{m^{2m}}\int_{\mathbb{R}^{+}}\arrowvert	T_\beta(f)\arrowvert^{2}dk.
		\end{aligned}
	\end{equation*}
	The second inequality follows from the Bernstein type inequality \eqref{Bernstein} for $f_1$ and $f_2$. The third follows from Lemma \ref{L3.5},
	$$\|f_i\|_{L^{2}(\mathbb{R}^{+},dk)}\leq C\|	T_\beta(f)\|_{L^{2}(\mathbb{R}^{+})},\,i=1,2.$$
	The last inequality is due to Stirling's formula for $m!$. Choose $m$ such that it is a positive integer and 
	$$(\frac{C}{r})^{2}\frac{L\cdot(b-a)}{m}\leq\frac{1}{2},$$
	e.g., $m=1+[2\frac{C^2}{r^{2}}L\cdot(b-a)]$, where $C>0$ depends only on $\beta$. Therefore,
	\begin{equation*}
		\int_{\mathbb{R}^{+}}\arrowvert T_\beta(f)\arrowvert^{2}dk\leq 2((\frac{C}{r})^{4(1+[2\frac{C^2}{r^{2}}L\cdot (b-a)])-1}\int_{\Omega}\arrowvert T_\beta(f)f\arrowvert^{2}dk,
	\end{equation*}
	and the constant $2(\frac{C}{r})^{4(1+[2\frac{C^2}{r^{2}}L\cdot (b-a)])-1}$ depends only on $b-a$, $\Omega$ and $\beta$. The proof of Lemma \ref{L3.6} is complete.
	
\end{proof}
\begin{corollary}\label{C3.1}
	Assume \( \Omega \subset \mathbb{R}^{+} \) is a thick set. For all \( \beta \in \mathbb{R} \), and for all \( \alpha \in \mathbb{R} \), and for all \( f \in L^{2}(\mathbb{R}^{+}) \) with \( \text{supp}\, f \subset [a,b] \), \( b > a \geq 0 \), there exists a constant \( C \) that depends only on \( \beta \), \( \Omega \), and \( b-a \) such that 
	\begin{equation}\label{Ce3.1}
		C \int_{\Omega} \left\arrowvert T_\beta(f)(x) \right\arrowvert^2 \, dx \geq \int_{\Omega} \left\arrowvert T_\alpha(f)(x) \right\arrowvert^2 \, dx.
	\end{equation}
\end{corollary}

\begin{proof}
	Formula \eqref{Ce3.1} is directly derived from the conclusions of Lemma \ref{L3.5} and Lemma \ref{L3.6}, since the upper bound of \( \| T_\alpha(f) \|_{L^2(\Omega)}^2 \) is controlled by \( C \| f \|_{L^2(\mathbb{R}^{+})}^2 \) with a universal constant \( C \).
\end{proof}

\begin{corollary}\label{C3.2}
	Assume \( \Omega \subset \mathbb{R}^{+} \) is a thick set. For all \( \beta \in \mathbb{R} \), and for any \( b > a \geq 0 \), and for any \( f \in L^2(\mathbb{R}^+) \) with \( \text{supp}\, f \subset [a,b] \), there exists a constant \( C \) that depends only on \( \beta \), \( \Omega \), and \( b-a \) such that
	\begin{equation}\label{Ce3.2}
		C \int_{\Omega} \left\arrowvert \int_{a}^{b} \cos(xy - \beta) f(y) \, dy \right\arrowvert^2 \, dx \geq \int_{\Omega} \left\arrowvert \int_{0}^{b-a} \cos(xy - \beta) f(y+a) \, dy \right\arrowvert^2 \, dx.
	\end{equation}
\end{corollary}

\begin{proof}
	It suffices to prove that the term on the right-hand side of the inequality has an upper bound controlled by \( C \| f \|_{L^2(\mathbb{R}^{+})}^2 \) with a universal constant \( C \). To see this, we rewrite \( \cos x = \frac{e^{ix} + e^{-ix}}{2} \), yielding
	\begin{equation*}
		\begin{aligned}
			\int_{0}^{b-a}\cos(xy - \beta)f(y+a)dy &= \int_{a}^{b}\cos(x(y-a)-\beta) f(y) dy\\
			&= \frac{1}{2} e^{i(\beta + ax)} (\hat{f}(x) + e^{-2iax} e^{-2i\beta} \hat{f}(-x)).
		\end{aligned}
	\end{equation*}
	Thus, applying the triangle inequality gives:
	\[
	\int_{\Omega} \left\arrowvert \int_{0}^{b-a} \cos(xy - \beta) f(y+a) \, dy \right\arrowvert^2 \, dx \leq 2 \| f \|_{L^2(\mathbb{R}^{+})}^2.
	\]
	This combines inequalities \eqref{Le3.6} and \eqref{Le3.5} to obtain \eqref{Ce3.2}.
\end{proof}
With the previous preparatory lemmas established, we now proceed to prove Lemma \ref{KeyL2}.
\par \textbf{Proof of Lemma \ref{KeyL2}}\\
Before proceeding with the proof, we note that the constant denoted by \( C(\Omega, \nu, b-a) \) in the following equalities and inequalities is not fixed; it may vary from one statement to another unless explicitly stated otherwise.
\par Given a thick set \( \Omega \), let \( \Omega^{\prime} = \Omega \setminus [0, b-a] \). For every function \( f \in L^2(\mathbb{R}^+) \) with \( \text{supp} \, F_\nu f \subset [a,b] \), we define the function \( \tilde{f} \) by the identity
\[
F_{\nu}(\tilde{f})(x) = F_{\nu}(f)(x+a).
\]
It can be observed that \( \text{supp} \, F_{\nu}(\tilde{f}) \subset [0, b-a] \). 
Now, for every \( \nu \), recalling the basic fact
\[
\| F_{\nu}(f) \|_{L^2(\mathbb{R}^+)} = \| f \|_{L^2(\mathbb{R}^+)}, \quad \forall f \in L^2(\mathbb{R}^+),
\]
we have
\begin{equation} \label{Pr3.1}
	\| f \|_{L^2(\mathbb{R}^+)}^2 = \| F_{\nu}(f) \|_{L^2(\mathbb{R}^+)}^2 = \| F_{\nu}(\tilde{f}) \|_{L^2(\mathbb{R}^+)}^2 = \| F_{\frac{1}{2}} F_{\nu}(\tilde{f}) \|_{L^2(\mathbb{R}^+)}^2.
\end{equation}
Since \( \text{supp} \, F_{\nu}(\tilde{f}) \subset [0, b-a] \), utilizing Lemma \ref{KeyL1} for \( F_{\frac{1}{2}} \) and Technical Lemma \ref{Technical1.2} with \( c = b-a \), we deduce that there exists a constant \( C(\Omega, b-a) \) such that
\begin{equation} \label{Pr3.2}
	\| F_{\frac{1}{2}} F_{\nu}(\tilde{f}) \|_{L^2(\mathbb{R}^+)}^2 \leq C(\Omega, b-a) \| F_{\frac{1}{2}} F_{\nu}(\tilde{f}) \|_{L^2(\Omega^{\prime})}^2.
\end{equation}
From the formulas \eqref{Bessel} and \eqref{Sec2.1.6}, we know that
\[
F_{\frac{1}{2}}(f) =\sqrt{\frac{2}{\pi}} \int_0^\infty \cos \left( x y - \frac{\pi}{2} \right) f(y)dy.
\]
Thus, we obtain
\[
\| F_{\frac{1}{2}} F_{\nu}(\tilde{f}) \|_{L^2(\Omega^{\prime})}^2 = \int_{\Omega^{\prime}} \left\arrowvert\sqrt{\frac{2}{\pi}} \int_0^{b-a} \cos \left( x y - \frac{\pi}{2} \right) F_{\nu}(f)(y + a)dy \right\arrowvert^2dx.
\]
Now, using the estimate \eqref{Ce3.2} in Corollary \ref{C3.2} with \( \beta = \frac{\pi}{2} \), we obtain
\[
\begin{aligned}
	\| F_{\frac{1}{2}} F_{\nu}(\tilde{f}) \|_{L^2(\Omega^{\prime})}^2 & = \int_{\Omega^{\prime}} \left\arrowvert \sqrt{\frac{2}{\pi}}\int_0^{b-a} \cos \left( x y - \frac{\pi}{2} \right) F_{\nu}(f)(y + a) \, dy \right\arrowvert^2dx \\
	& \leq C(\Omega, b-a) \int_{\Omega^{\prime}} \left\arrowvert\sqrt{\frac{2}{\pi}} \int_a^b \cos \left( x y - \frac{\pi}{2} \right) F_{\nu}(f)(y)dy \right\arrowvert^2 \, dx \\
	& = C(\Omega, b-a) \int_{\Omega^{\prime}} \left\arrowvert T_{-\frac{\pi}{2}} F_{\nu}(f) \right\arrowvert^2dx.
\end{aligned}
\]
This combines with the estimate \eqref{Ce3.1} in Corollary \ref{C3.1}, yielding
\[
\begin{aligned}
	\| F_{\frac{1}{2}} F_{\nu}(\tilde{f}) \|_{L^2(\Omega^{\prime})}^2 & = \int_{\Omega^{\prime}} \left\arrowvert \sqrt{\frac{2}{\pi}}\int_0^{b-a} \cos \left( x y - \frac{\pi}{2} \right) F_{\nu}(f)(y + a) \, dy \right\arrowvert^2dx \\
	& \leq C(\Omega, b-a) \int_{\Omega^{\prime}} \left\arrowvert \int_a^b\sqrt{\frac{2}{\pi}} \cos \left( x y - \frac{\pi}{2} \right) F_{\nu}(f)(y)dy \right\arrowvert^2dx \\
	& = C(\Omega, b-a) \int_{\Omega^{\prime}} \left\arrowvert T_{-\frac{\pi}{2}} F_{\nu}(f) \right\arrowvert^2dx \\
	& \leq C(\Omega, \nu, b-a) \int_{\Omega^{\prime}} \left\arrowvert T_{-\frac{\pi\nu}{2} - \frac{\pi}{2}} F_{\nu}(f) \right\arrowvert^2dx \\
	& = C(\Omega, \nu, b-a) \int_{\Omega^{\prime}} \left\arrowvert \int_a^b \sqrt{\frac{2}{\pi}}\cos \left( x y - \frac{\pi \nu}{2} - \frac{\pi}{2} \right) F_{\nu}(f)(y) \, dy \right\arrowvert^2dx.
\end{aligned}
\]
This combines with \eqref{Pr3.1} and \eqref{Pr3.2}, yielding that
\begin{equation} \label{Pr3.3}
	\| f \|_{L^2(\mathbb{R}^+)}^2 \leq C(\Omega, \nu, b-a) \int_{\Omega^{\prime}} \left\arrowvert \int_a^b \sqrt{\frac{2}{\pi}}\cos \left( x y - \frac{\pi \nu}{2} - \frac{\pi}{2} \right) F_{\nu}(f)(y) \, dy \right\arrowvert^2dx.
\end{equation}
Starting from now, we fix the constants \( C(\Omega, \nu, b-a) \) obtained in \eqref{Pr3.3} and denote it as \( C^* \). We now claim that for sufficiently large \( a \),
\begin{equation} \label{Pr3.5}
	\| f \|_{L^2(\mathbb{R}^+)}^2 \leq 4 C^* \| F_{\nu} F_{\nu}(f) \|_{L^2(\Omega^{\prime})}^2 = 4 C^* \| f \|_{L^2(\Omega^{\prime})}^2
\end{equation}
holds. For convenience, we denote \( g = F_{\nu}(f) \). To prove \eqref{Pr3.5}, we deduce by identity \eqref{Sec2.1} that
\begin{equation*}
	\begin{aligned}
		& C^* \int_{\Omega^{\prime}} \left\arrowvert \int_{a}^{b}\sqrt{\frac{2}{\pi}} \cos \left( x y - \frac{\pi \nu}{2} - \frac{\pi}{4} \right) g(y)dy \right\arrowvert^2dx\\
		& = C^* \int_{\Omega^{\prime}} \left\arrowvert \int_{a}^{b} \sqrt{\frac{2}{\pi}}\cos \left( x y - \frac{\pi \nu}{2} - \frac{\pi}{4} \right) g(y) + \sqrt{xy} R_{\nu}(xy) g(y) - \sqrt{xy} R_{\nu}(xy) g(y)dy \right\arrowvert^2 \, dx \\
		& \leq 2 C^* \int_{\Omega^{\prime}} \left\arrowvert \int_{a}^{b} \sqrt{xy} J_{\nu}(xy) g(y)dy \right\arrowvert^2 \, dx + 2 C^* \int_{\Omega^{\prime}} \left\arrowvert \int_{a}^{b} \sqrt{xy} R_{\nu}(xy) g(y) \, dy \right\arrowvert^2dx,
	\end{aligned}
\end{equation*}
where the last inequality follows from the Mean Value inequality. Combining this with \eqref{Pr3.3}, we obtain
\begin{equation} \label{Pr3.6}
	\| f \|_{L^2(\mathbb{R}^+)}^2 \leq 2 C^* \int_{\Omega^{\prime}} \arrowvert \int_{a}^{b} \sqrt{xy} J_{\nu}(xy) g(y) \, dy \arrowvert^2 \, dx + 2 C^* \int_{\Omega^{\prime}}\arrowvert \int_{a}^{b} \sqrt{xy} R_{\nu}(xy) g(y) \, dy \arrowvert^2 \, dx.
\end{equation}
When \( a \geq \frac{1}{b-a} \), by Hölder's inequality and estimate \eqref{Sec2.2}, we get that
\begin{equation*}
	\begin{aligned}
		& 2 C^* \int_{\Omega^{\prime}} \left\arrowvert \int_{a}^{b} \sqrt{xy} R_{\nu}(xy) g(y)dy \right\arrowvert^2dx \\
		& \leq 2 C^* \int_{\Omega^{\prime}} \int_{a}^{b} \left\arrowvert \sqrt{xy} R_{\nu}(xy) \right\arrowvert^2 dy \int_{a}^{b} \left\arrowvert g(y) \right\arrowvert^2dydx \\
		& \leq 2 C^* \int_{b-a}^{\infty} \int_{a}^{b} \left\arrowvert \sqrt{xy} R_{\nu}(xy) \right\arrowvert^2dy \int_{a}^{b} \left\arrowvert g(y) \right\arrowvert^2 dydx \\
		& \leq 2 C^* C_{\nu} \int_{b-a}^{\infty} \int_{a}^{b} \frac{1}{x^2 y^2} \, dy \int_{a}^{b} \left\arrowvert g(y) \right\arrowvert^2 dy dx \\
		& = 2 C^* C_{\nu} \frac{1}{b-a} \left( \frac{1}{a} - \frac{1}{b} \right) \| g \|_{L^2}^2 \leq 2 C^* C_{\nu} \frac{1}{a^2} \| g \|_{L^2}^2 \\
		& = 2 C^* C_{\nu} \frac{1}{a^2} \| f \|_{L^2(\mathbb{R}^+)}^2,
	\end{aligned}
\end{equation*}
where \( C_\nu \) is the constant that appears in estimate \eqref{Sec2.2}. Then, if 
\[
a \geq \max \left\{ 2 \sqrt{C^* C_{\nu}}, \frac{1}{b-a} \right\},
\]
we have
\begin{equation*}
	2 C^* \int_{\Omega^{\prime}} \left\arrowvert \int_{a}^{b} \sqrt{xy} R_{\nu}(xy) g(y) \, dy \right\arrowvert^2 \, dx \leq \frac{1}{2} \arrowvert f \|_{L^2}^2.
\end{equation*}
Combining this with \eqref{Pr3.6}, we get the desired estimate \eqref{Pr3.5}. Since \( \Omega^{\prime} \subset \Omega \) by our definition of \( \Omega^{\prime} \), we can conclude that for every
\[
a \geq \max \left\{ 2 \sqrt{C^* C_{\nu}}, \frac{1}{b-a} \right\},
\]
we obtain
\begin{equation} \label{Pr3.8}
	\| f \|_{L^2(\mathbb{R}^+)}^2 \leq 4 C^* \| f \|_{L^2(\Omega)}^2.
\end{equation}
When 
\begin{equation*} 
	a<\max \left\{ 2 \sqrt{C^* C_{\nu}}, \frac{1}{b-a} \right\},
\end{equation*}
it follows that
\begin{equation*} 
	[a,b] \subset \left[ 0, \max \left\{ 2 \sqrt{C^* C_{\nu}}, \frac{1}{b-a} \right\} + b-a \right] . 
\end{equation*}
Since the constant \( \max \left\{ 2 \sqrt{C^* C_{\nu}}, \frac{1}{b-a} \right\} \) depends only on \( \Omega \), \( \nu \), and \( b-a \), by taking \[
h = \max \left\{ 2 \sqrt{C^* C_{\nu}}, \frac{1}{b-a} \right\} + b-a
\] in Lemma \ref{KeyL1}, there exists a constant $C_1(\Omega, \nu, b-a)$ such that
\begin{equation} \label{Pr3.9}
	\| f \|_{L^2(\mathbb{R}^+)}^2 \leq C_1(\Omega, \nu, b-a) \| f \|_{L^2(\Omega)}^2.
\end{equation}
Thus, for a given thick set \( \Omega \), by combining \eqref{Pr3.8} and \eqref{Pr3.9}, if we let
\[
C = \max \left\{ 4 C^*, C_1(\Omega, \nu, b-a) \right\},
\]
which depends only on \( \Omega \), \( \nu \), and \( b-a \), we obtain the spectral inequality
\[
\| f \|_{L^2(\mathbb{R}^+)}^2 \leq C \| f \|_{L^2(\Omega)}^2,
\]
as required by Lemma \ref{KeyL2}. Hence, the proof of Lemma \ref{KeyL2} is complete.$\hfill{\Box}$

\begin{appendix}
	\section{Proof of Lemma \ref{L3.2}}
	We will now prove Lemma \ref{L3.2}. For this purpose, we recall Lemma \ref{L3.2} as follows.	
	\begin{lemma}\label{A}
		Let $\Omega\subset \mathbb{R}^{+}$ be a measurable set, $\nu\geq0$. Then the following statements are equivalent: \\
		\\
		(i)There exist constant $r>0$ and $L>0$ such that $\Omega$ is $(r,L)$-$u_{\nu}$ thick. \\
		\\
		(ii)There exist constant $r_1>0$ and $L_1>0$ such that $\Omega$ is $(r_1,L_1)$-thick.
		\\
		\\where $(r,L)$ and $(r_1,L_1)$ depend only on each other and $\nu$.
	\end{lemma}
	\begin{proof}
		We divide the argument into two steps.\\
		\\
		\textbf{Step 1} We show that (i) of Lemma \ref{A} implies (ii) of Lemma \ref{A}.\\
		\par For every $\nu\geq0$, suppose $\Omega\subset \mathbb{R}^{+}$ is $u_{\nu}$-thick, which means that there exist $r > 0$ and $L > 0$ such that
		\begin{equation*}
			\int_{\Omega\cap [x,x+L]}t^{2\nu+1}dt\geq r\int_{[x,x+L]}t^{2\nu+1}dt,\,\forall x\in \mathbb{R}^{+}.
		\end{equation*}
		For convenience, let's denote $\kappa:=2\nu+1$. Since $\nu \geq 0$, it follows that $\kappa \geq 1$. 
		Using this notation, we can rewrite the inequality as follows:
		\begin{equation*}
			\int_{\Omega\cap [x,x+L]}t^{\kappa}dt\geq r\int_{[x,x+L]}t^{\kappa}dt,
			\ \kappa\geq 1.
		\end{equation*}
		By using the fact that $t^{\kappa}$ is monotonically increasing on $[0,\infty)$, we obtain
		\begin{equation*} 
			\int_{\Omega\cap [x,x+L]}(x+L)^{\kappa}dt\geq \int_{\Omega\cap [x,x+L]}t^{\kappa}dt\geq \int_{\Omega\cap [x,x+L]}t^{\kappa}dt=\frac{(x+L)^{\kappa+1}-x^{\kappa+1}}{\kappa+1}r,
		\end{equation*}
		which yields
		\begin{equation*}
			\arrowvert\Omega\cap [x,x+L]\arrowvert=\int_{\Omega\cap [x,x+L]}dt \geq \frac{r}{\kappa+1}\frac{(x+L)^{\kappa+1}-x^{\kappa+1}}{(x+L)^{\kappa}}.
		\end{equation*}
		Now, by using the fact that $$\frac{x^{\kappa+1}}{(x+L)^{\kappa}}\leq \frac{x^{\kappa+1}}{x^{\kappa}}, $$
		we obtain that
		\begin{equation*}
			\arrowvert\Omega\cap [x,x+L]\arrowvert=\int_{\Omega\cap [x,x+L]}dt \geq \frac{r}{\kappa+1}\frac{(x+L)^{\kappa+1}-x^{\kappa+1}}{(x+L)^{\kappa}}\geq\frac{rL}{\kappa+1}=\frac{r}{2\nu+2}\arrowvert[x,x+L]\arrowvert
		\end{equation*}
		for all $x\in \mathbb{R}^{+}$.
		Thus we obtain that $\Omega$ is a thick set with constant $\left(\frac{r}{2\nu+2}, L\right)$ if it is $(r,L)$-$\mu_{\nu}$ thick.\\
		\\
		\textbf{Step 2} We show that (ii) of Lemma \ref{A} implies (i) of Lemma \ref{A}.\\
		\par
		\par Assume that $\Omega$ is thick, meaning that there exist $r$ and $L$ such that 
		\begin{equation*}
			\arrowvert \Omega\cap [x,x+L]\arrowvert\geq rL.
		\end{equation*}
		We deduce that $\Omega$ is also $u_{\nu}$-thick, that is, there exist constants $r_1$ and $L_1$ such that
		\begin{equation*}
			\mu_{\nu}(\Omega\cap [x,x+L_1])=\int_{\Omega\cap [x,x+L_1]}t^{\kappa}dt\geq r_1\int_{[x,x+L_1]}t^{\kappa}dt=r_1\mu_{\nu}([x,x+L_1]),\quad \forall x\in \mathbb{R}^{+},
		\end{equation*}
		where $\kappa:=2\nu+1$ as noted before. To this end, we take the parameter $L_1$ as $L$. Next, we proceed to show there exists a constant $r_1>0$ such that
		\begin{equation}\label{A1.1}
			\int_{\Omega\cap [x,x+L]}t^{\kappa}dt\geq r_1\int_{[x,x+L]}t^{\kappa}dt,\quad \forall x\in \mathbb{R}^{+}.
		\end{equation}
		Since $t^{\kappa}$ is monotonically increasing on $[0,\infty)$, by using technical Lemma \ref{Technical1.1} with $A=\Omega\cap [x,x+L]$, we obtain
		\begin{equation*}
			\int_{\Omega\cap [x,x+L]}t^{\kappa}dt\geq\int_{[x,x+rL]}t^{\kappa}dt.
		\end{equation*}
		Thus, to establish \eqref{A1.1}, it suffices to show that there exists a constant $r_1>0$ such that inequality
		\begin{equation*}
			\int_{[x,x+rL]}t^{\kappa}dt\geq r_1\int_{[x,x+L]}t^{\kappa}dt
		\end{equation*}
		holds for all $x\in [0,\infty)$. This reduces to estimating the minimum value of the function
		\begin{equation}\label{A1.2}
			f(x):=\frac{(x+rL)^{\kappa+1}-x^{\kappa+1}}{(x+L)^{\kappa+1}-x^{\kappa+1}}
		\end{equation}
		on $[0,\infty)$. We now proceed with the analysis by considering several cases.
		\\
		\\\textbf{Case 1}. $x>L$. Using the Mean Value theorem and the monotonicity of $t^{\kappa}$, we have
		$$\displaystyle{\frac{(x+rL)^{\kappa+1}-x^{\kappa+1}}{(x+L)^{\kappa+1}-x^{\kappa+1}}}=\frac{(\kappa+1)(x+\theta_{1}(x)rL)^{\kappa}rL}{(\kappa+1)(x+\theta_{2}(x)L)^{\kappa}L}
		\geq(\frac{x}{x+L})^{\kappa}r\geq(\frac{x}{2x})^{\kappa}r=(\frac{1}{2})^{\kappa}r,$$
		where $\theta_{1}(x), \theta_{2}(x)\in (0,1).$\\
		\textbf{Case 2}. $L\geq x>rL$. Again, using the Mean Value theorem and the monotonicity of $t^{\kappa}$, we have
		\begin{equation*}
			\displaystyle{\frac{(x+rL)^{\kappa+1}-x^{\kappa+1}}{(x+L)^{\kappa+1}-x^{\kappa+1}}}=\frac{(\kappa+1)(x+\theta_{1}rL)^{\kappa}rL}{(\kappa+1)(x+\theta_{2}L)^{\kappa}L}=(\frac{1+\theta_{1}\frac{rL}{x}}{1+\theta_{2}\frac{L}{x}})^{\kappa}r\geq\frac{(\frac{rL}{x})^{\kappa}}{(1+\frac{L}{x})^{\kappa}}r\geq\frac{r^{\kappa+1}}{2^{\kappa}}.
		\end{equation*}
		\textbf{Case 3}. $rL\geq x\geq 0.$\\
		Since taking the derivative of the function is quite complex, we provide a rough estimate of the function's lower bound in this case here. Due to the continuity of the function \eqref{A1.2} and the fact that $f(0)=r^{k+1}$, So there exists $\varepsilon>0$ depending only on $\kappa$, $r$ and $L$, such that
		\begin{equation*}
			\frac{(x+rL)^{\kappa+1}-x^{\kappa+1}}{(x+L)^{\kappa+1}-x^{\kappa+1}}>\frac{1}{2}r^{\kappa+1}
		\end{equation*}
		holds for all $x\in[0,\varepsilon]$. For $\varepsilon\leq x \leq rL$, we have
		$$\frac{(x+rL)^{\kappa+1}-x^{\kappa+1}}{(x+L)^{\kappa+1}-x^{\kappa+1}}\geq \frac{2^{\kappa+1}x^{\kappa+1}-x^{\kappa+1}}{(x+L)^{\kappa+1}}=\frac{2^{\kappa+1}-1}{(1+\frac{L}{x})^{\kappa+1}}\geq \frac{2^{\kappa+1}-1}{(1+\frac{L}{\varepsilon})^{\kappa+1}}.$$
		Finally, by taking 
		$$r_1=\min\left\{(\frac{1}{2})^{2\nu+1}r, (\frac{1}{2})^{2\nu+1}r^{2\nu+2}, \frac{1}{2}r^{2\nu+1+1}, \frac{2^{2\nu+1+1}-1}{(1+\frac{L}{\varepsilon})^{2\nu+2}}\right\}>0,$$
		we obtain that
		\begin{equation*}
			\int_{\Omega\cap [x,x+L]}t^{2\nu+1}dt\geq\int_{[x,x+rL]}t^{2\nu+1}dt\geq r_1\int_{[x,x+L]}t^{2\nu+1}dt.
		\end{equation*}
		Thus, we conclude that $\Omega$ is a $\mu_\nu$ thick set with constant $(r_1,L)$, depending only on $r$, $L$ and $\nu$, which end the proof of Lemma \ref{L3.2}.
	\end{proof}
	\section{Proof of Lemma \ref{L3.5}}
	In this appendix, we provide a proof for Lemma \ref{L3.5}. For this purpose, we recall Lemma \ref{L3.5} as follows.
	\begin{lemma}\label{B}
		For every $\beta\in \mathbb{R}$, the operators $T_\beta$ defined by \eqref{TB} extends to bounded invertible maps on $L^2(\mathbb{R}^{+})$. Moreover the following estimate holds for some constants $B>A>0$,
		\begin{equation}\label{B1.1}
			A\lVert f\rVert_{L^{2}(\mathbb{R}^{+})}\leq\lVert T_\beta(f)\rVert_{L^{2}(\mathbb{R}^{+})}\leq B\lVert f\rVert_{L^{2}(\mathbb{R}^{+})},
		\end{equation}
		where the lower bound $A$ depends only on $\beta$.
	\end{lemma}
	\begin{proof}
		By the definition of operators $T_\beta$, for $f\in L^2(\mathbb{R}^{+})$, we have
		\begin{equation*}
			\begin{aligned}
				T_\beta(f)&=\sqrt{\frac{2}{\pi}}\cos(\beta)\int_{0}^{\infty}\cos(xy)f(y)dy+\sqrt{\frac{2}{\pi}}\sin(\beta)\int_{0}^{\infty}\sin(xy)f(y)dy\\
				&=\cos(\beta)F_c(f)+\sin(\beta)F_s(f),
			\end{aligned}
		\end{equation*}
		where $F_c$ and $F_s$ denote the Fourier-Cosine and Fourier-Sine transform, respectively, and both are unitary operators in $L^2(\mathbb{R}^{+})$. Using a standard trigonometric inequality, we obtain
		\begin{equation*}
			\| \cos(\beta) - \arrowvert\sin(\beta)\arrowvert \| \cdot \| f \|_{L^{2}(\mathbb{R}^{+})} \leq \| T_\beta(f) \|_{L^{2}(\mathbb{R}^{+})} \leq (\cos(\beta) + \arrowvert\sin(\beta)\arrowvert) \| f \|_{L^{2}(\mathbb{R}^{+})}.
		\end{equation*}
		Thus the upper bound \eqref{B1.1} is trivial, and we can set $B=2$ for any parameter $\beta$. To establish the lower bound \( A > 0 \) in \eqref{B1.1}, it is sufficient to verify its validity for \( \beta = \pm \frac{\pi}{4} \). For $\beta=\frac{\pi}{4}$, the operator $T_{\frac{\pi}{4}}$ corresponds to the half-Hartley transform, and Theorem 1 in \cite{S} provides known lower and upper bounds for the half-Hartley transform:
		$$\sqrt{2}\Arrowvert f\Arrowvert_{L^{2}(\mathbb{R}^{+})}\leq\Arrowvert T_{\frac{\pi}{4}}(f)\Arrowvert_{L^{2}(\mathbb{R}^{+})}\leq2\Arrowvert f\Arrowvert_{L^{2}(\mathbb{R}^{+})}.$$
		For $\beta=-\frac{\pi}{4}$, we can show that
		\begin{equation}\label{B1.2}
			T_{-\frac{\pi}{4}}=H_1\circ T_{\frac{\pi}{4}}^{-},
		\end{equation}
		where $H_1$ (as defined in \eqref{B1.3} below) is the one-sided Hilbert transform, and $T_{\frac{\pi}{4}}^{-}$ is the inverse of $T_{\frac{\pi}{4}}$. Since the one-sided Hilbert transform has a bounded inverse operator on $L^2(\mathbb{R}^{+})$ (see \cite [Chapter 12]{FWK}), we obtain \eqref{B1.1} for $T_{-\frac{\pi}{4}}$.
		To prove \eqref{B1.2}, it suffices to show that for every $f\in C^{\infty}_{0}(\mathbb{R}^{+})$,
		\begin{equation}\label{B1.3}
			T_{-\frac{\pi}{4}}\circ T_{\frac{\pi}{4}}(f)=H_1(f)=\frac{1}{\pi}PV\int_0^{\infty}\frac{f(y)}{y-x}dy.
		\end{equation}
		Indeed, 
		$$T_{-\frac{\pi}{4}}\circ T_{\frac{\pi}{4}}(f)(x)=\frac{2}{\pi}\int_{0}^{\infty}\cos(xz+\frac{\pi}{4})dz\int_{0}^{\infty}\cos(zy-\frac{\pi}{4})f(y)dy.$$
		It can be computed by the formal change of order of integration if we write
		$$\frac{2}{\pi}\int_{0}^{\infty}\cos(xz+\frac{\pi}{4})dz\int_{0}^{\infty}\cos(zy-\frac{\pi}{4})f(y)dy$$
		$$=\frac{2}{\pi}\lim_{\varepsilon\downarrow 0}\int_{0}^{\infty}f(y)dy\int_{0}^{\infty}e^{-\varepsilon z}\cos(xz+\frac{\pi}{4})\cos(zy-\frac{\pi}{4})dz.$$
		We  easily find that
		\begin{equation*}
			\begin{aligned}
				\int_{0}^{\infty}\cos(xz+\frac{\pi}{4})\cos(zy-\frac{\pi}{4})dz&=\frac{1}{4}(\int_{-\infty}^{0}e^{-z(x+y)i}dz+\int_{0}^{\infty}e^{-z(x+y)i}dz)\\
				&+\frac{1}{4}(e^{\frac{\pi}{2}i}\int_{-\infty}^{0}e^{-z(x-y)i}dz+e^{-\frac{\pi}{2}i}\int_{0}^{\infty}e^{-z(x-y)i}dz).
			\end{aligned}
		\end{equation*}
		Then recalling the Fourier transform of homogeneous distributions $\chi^{\alpha}_{\pm}$ (Cf. \cite [p.167]{LH})
		$$F(\chi^{a}_{\pm})(\xi)=e^{\mp i\pi(a+1)/2}(\xi\mp i0)^{-a-1},$$
		we obtain 
		$$\int_{0}^{\infty}\cos(xz+\frac{\pi}{4})\cos(zy-\frac{\pi}{4})dz=\frac{\pi}{2}\delta(x+y)-\frac{1}{2}PV\frac{1}{x-y}.$$
		When restricted to $x\in \mathbb{R}^{+}$ and $f\in L^{2}(\mathbb{R}^{+})$, we get \eqref{B1.2}. This completes the proof.
	\end{proof}
\end{appendix}
\section*{Acknowledgments}
Z. Duan was supported by the National Natural Science Foundation of China under grants 61671009 and 12171178. Longben Wei thanks Han Cheng and Professor Shanlin Huang for some useful discussion.
\section*{Declarations}
\textbf{Conflict of interest} The authors declare that there is no conflict of interest.



\vspace{2em}

	
\end{document}